\newcommand{\utot}{u^{\text{tot}}}
\newcommand{\uinc}{u^{\text{inc}}}
\newcommand{\usc}{u^+}
\newcommand{\diff}{\kappa}
\newcommand{\Hex}{H_{\Delta}^1(\Omega^+)}
\newcommand{\Hin}{H_{\mathcal{L}_\diff}^1(\Omega^-)}
\renewcommand{\div}{\operatorname{div}}
\newcommand{\R}{\mathbb{R}}
\newcommand{\sa}[1]{\left\{\!\!\left\{#1\right\}\!\!\right\}}
\newcommand{\sj}[1]{\left[\!\left[#1\right]\!\right]}
\newcommand{\tstep}{{\Delta t}}
\newcommand{\kr}{\lambda}
\newcommand{\Cinv}{C_{\text{inv}}}
\newcommand{\Mb}{\mathbf{M}}
\newcommand{\ub}{\mathbf{u}}
\newcommand{\Cb}{\mathbf{C}}
\newcommand{\Ib}{\mathbf{I}}
\renewcommand{\O}{\mathcal{O}}
\renewcommand{\Re}{\operatorname{Re}}
\newcommand{\Inner}[2]{\left(#1, #2 \right)_{\Omega}}
\newcommand{\inner}[2]{\left\langle #1,#2\right\rangle_{\Gamma}}
\newcommand{\mi}{\mathrm{i}}
\newcommand{\dbdf}{\delta_{\text{BDF2}}}
\newcommand{\dtr}{\delta_{\text{TR}}}
\newcommand{\dttr}{\delta_{\text{TTR}}}
\newcommand{\eps}{\mathrm{eps}}
\newtheorem{theorem}{Theorem}
\newtheorem{assumption}{Assumption}
\newtheorem{lemma}{Lemma}
\newcommand{\bl}[1]{ #1}
\begin{document}
\title{Implicit/explicit, BEM/FEM coupled scheme for acoustic
waves with the wave equation in the second order formulation}

\author{Lehel Banjai\footnote{ Maxwell Institute for Mathematical Sciences,  School of Mathematical \& Computer Sciences;  Heriot-Watt University,  Edinburgh EH14 4AS, UK}\\{\em In memory of a dear friend Francisco Javier Sayas}}


\maketitle
\begin{abstract}
Acoustic scattering of waves by bounded inhomogeneities in an unbounded homogeneous domain is considered. A symmetric coupled system  of time-domain boundary integral equations and the second order formulation of the wave equation is described. A fully discrete system consists of spatial discretization  by boundary and finite element methods (BEM/FEM), leapfrog time-stepping in the interior, and convolution quadrature for the boundary integral equations. Convolution quadrature is based on BDF2, trapezoidal rule, or a newly introduced truncated trapezoidal rule that has some favourable properties for both the implementation and quality of approximate solution. We give a stability and convergence analysis under a CFL conditon of the fully discrete system. The theoretical results are illustrated by numerical experiments in two dimensions.
\end{abstract}

\section{Introduction}
We consider the numerical simulation of the scattering of acoustic waves by a bounded inhomogeneity immersed in an infinite homogeneous domain. A time-domain boundary integral formulation (TBIE)  will be used in the unbounded homogeneous domain and will be coupled with the non-homogeneous wave equation (PDE) in the interior. In space we discretize the TBIE and PDE  by the boundary element and finite element methods respectively (BEM/FEM). The main aim and novelty of the paper is to present a new discretization using a second order formulation of the wave equation in the interior, explicit time-discretization of the PDE in the interior and implicit discretization of the BIE in the exterior and to prove stability and convergence of the scheme.  Previous approaches have either used the first order formulation \cite{AbJoRoTe:2011,BaLuSa:2015}, or a single implicit time-discretization for the interior and exterior equations \cite{FJS_FEMBEM,FaMo:2014,FaMo:2015}, or have not given a complete error analysis \cite{FaMo:2014,FaMo:2015,SOARES20071816}.   The reason for using different discretizations for the integral and PDE is the efficiency of the explicit time-discretization in the interior and the stability of the implicit scheme in the exterior. Note that the implicit time-discretization of the exterior problem is equivalent to convolution quadrature of the time-domain boundary integral operators; see \cite{Lub94}. We should say that the Johnson-N\'ed\'elec non-symmetric coupling \cite{JN} while cheaper to implement and showing  good performance in numerical experiments \cite{FaMo:2014,FaMo:2015}, has so far escaped the analysis in the time-domain.

There are many works on numerical methods for time-domain boundary integral equations and the wave equation; see the review \cite{Costabel:2017} for TDBIE and the recent book  \cite{Cohen_new} for discretizations of the wave equation. However, there are but a few works on the coupling in the time-domain; see previous paragraph. The earlier work \cite{BaLuSa:2015} is closest to the method developed here. While \cite{BaLuSa:2015} analysed only BDF2 based discretization for the exterior problem we allow other second order, A-stable linear multistep methods. Furthermore, we introduce a new time-discretization with many of the good properties of the trapezoidal scheme, but more easily implemented.  However, the main novelty is that we now use the second order formulation of the wave equation which allows for both simpler formulation and implementation. 

We next  give the statement of the problem and then proceed to introducing time-domain boundary integral operators and giving the weak formulation of the coupled system. In Section~4 we describe time discretization of TDBIE by convolution quadrature. Section 5 is central to the paper where a full stability and convergence analysis is given. In Section 6 we say a few words about the implementation and the choice of linear multistep method underlying the convolution quadrature. Finally we conclude with numerical experiments supporting the theory.  

\section{Statement of the problem}

Let $\Omega_j \subset \mathbb{R}^d$, $j = 1,\dots,J$, be open, bounded, connected Lipschitz domains such that their closures do not intersect and $d = 2,3$ is the spatial dimension. The inhomogeneity will be contained in $\Omega^- = \cup_{j = 1}^J \Omega_j$, $\Omega^+ = \mathbb{R}^d \setminus \overline{\Omega^-}$ is the exterior domain, and $\Gamma = \partial \Omega^-$ the boundary separating them; to simplify  the notation we will often write $\Omega$ for $\Omega^-$. The wave speed inside $\Omega^-$ can be variable $c(x)\colon \Omega^- \rightarrow \mathbb{R}$ with $c \in L^\infty (\Omega^-)$, $c > 0$ and $\|c\|_{L^\infty(\Omega^-)} < \bl{c_1}$ for some constant $c_1>0$. The diffusion coefficient is denoted by $\diff\colon \Omega^- \rightarrow \mathbb{R}_{\text{sym}}^{d \times d}$, where $\mathbb{R}_{\text{sym}}^{d \times d}$ is the space of symmetric real matrices and we assume that there exists a constant $\diff_0 > 0$ such that
\[
\xi^T \diff(x) \xi \geq \diff_0 |\xi|^2 \quad \text{for all } x \in \Omega^-, \;\xi \in \mathbb{R}^d.
\]
Furthermore, $\|\kappa(x)\|_2 \leq \kappa_1$ for some constant $\kappa_1> 0$ where we used the Euclidean matrix norm.

We also require the Sobolev spaces
\begin{equation}
  \label{eq:spaces}
  \begin{split}    
\Hin &= \{u \in H^1(\Omega^-) \,:\, \div (\diff \nabla u) \in L^2(\Omega^-)  \},\\
\Hex &= \{u \in H^1(\Omega^+) \,:\,  \Delta u \in L^2(\Omega^+)  \}.
  \end{split}
\end{equation}
The exterior unit normal is denoted by $\nu \in L^\infty(\Gamma)$,  the trace operators are denoted by $\gamma^{\pm} \colon H^1(\Omega^{\pm}) \rightarrow H^{1/2}(\Gamma)$ and the normal trace operators by $\partial^+_\nu\cdot = \nu . (\nabla\cdot) \colon \Hex \rightarrow H^{-1/2}(\Gamma)$ and  $\partial^-_{\diff,\nu}\cdot = \nu .(\diff \nabla  \cdot)\colon \Hin \rightarrow H^{-1/2}(\Gamma)$.

Further, we denote by $\|\cdot\|_{\Omega^{\pm}}$ the $L^2(\Omega^{\pm})$ norm. The $L^2$-sesquilinear products over $\Omega$ and $\Gamma$ are denoted by
\[
\Inner{u}{v}: = \int_\Omega u \overline{v} \qquad \inner{\varphi}{\psi} = \int_\Gamma \varphi \overline{\psi}.
\]
These products can be extended in the usual way to duality products $H^1(\Omega) \times (H^1(\Omega))'$ and $H^{1/2}(\Gamma) \times H^{-1/2}(\Gamma)$ or $H^{-1/2}(\Gamma) \times H^{1/2}(\Gamma)$. Finally the norms of the Hilbert spaces  $H^{-1/2}(\Gamma)$ and $H^{1/2}(\Gamma)$ are denoted by
\[
\|\cdot\|_{-1/2,\Gamma} := \|\cdot\|_{H^{-1/2}(\Gamma)} \qquad \|\cdot\|_{1/2,\Gamma} := \|\cdot\|_{H^{1/2}(\Gamma)}.
\]
Where the mapping properties of an operator are clear, we may use $\|\cdot\|$ to denote the natural norm.

Let $\uinc$ be the {\em incident wave} satisfying
\begin{equation}
  \label{eq:wave_inc}
  \partial_t^2 \uinc -\Delta \uinc = 0 \qquad \text{in }  \Omega^+\times \bl{[0,T]}.
\end{equation}
We assume that the supports \bl{of $\uinc(0)$ and $\partial_t \uinc(0)$} are contained in $\Omega^+$ and that the initial energy is finite, i.e.,  $\frac12\|\partial_t\uinc(0)\|^2_{L^2(\Omega^+)}+\frac12\|\nabla\uinc(0)\|^2_{L^2(\Omega^+)} < \infty$. The {\em total field} $\utot$ satisfies the homogeneous wave equation in the exterior domain
\begin{equation}
  \label{eq:utot_ex}
  \partial_t^2 \utot -\Delta \utot = 0 \qquad \text{in }  \Omega^+\times \bl{[0,T]}
\end{equation}
and the non-homogeneous wave equation in the interior
\begin{equation}
  \label{eq:utot_in}
\frac1{c^2}  \partial_t^2 \utot -\div (\diff \nabla \utot) = f \qquad \text{in }  \Omega^-\times \bl{[0,T]},
\end{equation}
where the support of $f(t) \in L^2(\Omega^-)$ is contained in $\Omega^-$ for all $t \in [0,T]$. The system is completed by  transmission conditions on $\Gamma$
\begin{equation}
  \label{eq:utot_transm}
\gamma^+\utot = \gamma^- \utot, \qquad  \partial_\nu^+ \utot =   \partial^-_{\diff,\nu} \utot 
\end{equation}
and initial conditions
\begin{equation}
  \label{eq:utot_init_plus}  
\utot(0) = \uinc(0), \; \partial_t\utot(0) = \partial_t\uinc(0)
\qquad \text{ in } \Omega^+
\end{equation}
and 
\begin{equation}
  \label{eq:utot_init_minus}  
\utot(0) = u_0, \; \partial_t \utot(0) = v_0\qquad 
\text{ in } \Omega^-,
\end{equation}
for given initial data $u_0$ and $v_0$ with supports contained in $\Omega^-$.
Note that due to the assumptions on $\uinc$, $u_0$, and $v_0$,  $\utot(0)$ \bl{and $\partial_t\utot(0)$ are zero in a neighbourhood of $\Gamma$}.

In the exterior, as is common, instead of $\utot$ we will be computing the {\em scattered field} 
\begin{equation}
  \label{eq:scattered}
\usc = \utot-\uinc \qquad \text{in }\Omega^+.
\end{equation}
Furthermore, to simplify notation, we denote the total field in the interior by
\begin{equation}
  \label{eq:tot_interior}
u = \utot \qquad \text{in }\Omega^-.
\end{equation}

\begin{figure}
  \centering
  \includegraphics[width=.35\textwidth]{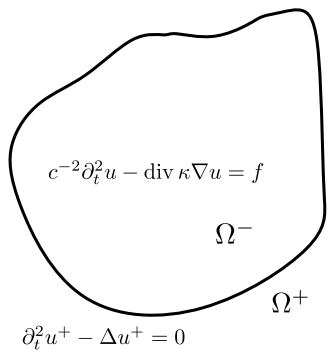}
  \caption{The geometrical setting of the coupling problem. }
  \label{fig:coupling}
\end{figure}

Putting everything together we are solving the following problem: Find $u(t) \in \Hin$, $\usc(t) \in \Hex$ such that for  $t\in [0,T]$
\begin{subequations}\label{eq:prob}
  \begin{equation}
    \label{eq:interior}
    c^{-2}\partial_t^2 u-\div (\diff \nabla u) = f \qquad \text{in } \Omega^-,
  \end{equation}
  \begin{equation}
    \label{eq:exterior}
    \partial_t^2 \usc-\Delta \usc = 0 \qquad \text{in } \Omega^+,
  \end{equation}
  \begin{equation}
    \label{eq:trace_cond}
    \gamma^- u= \gamma^+ \usc + \beta_0 \qquad \text{on } \Gamma,
  \end{equation}
  \begin{equation}
    \label{eq:trace_cond1}
    \partial_{\diff,\nu}^- u= \partial_{\nu}^- \usc + \beta_1 \qquad \text{on } \Gamma,
  \end{equation}
\end{subequations}
where $\beta_0 = \gamma^+ \uinc$ and $\beta_1 = \partial_\nu \uinc$ are the traces of the incident wave onto $\Gamma$; \bl{see Figure~\ref{fig:coupling}}. As at time $t = 0$, $\uinc$ has not reached the domain $\Omega^-$, $\usc$  satisfies the zero initial condition
\[
  \usc(0) = \partial_t \usc(0) = 0 \qquad \text{ in } \Omega^+.
\]
and $u$ the inital condition
\[
  u(0) = u_0,\; \partial_t u(0) = v_0 \qquad \text{in } \Omega^-.
\]
\bl{The smoothness requirements in time of the data $\uinc$ and $f$ will be given in the next section once we describe the boundary-field formulation of the problem. }

\section{One sided time-convolutions and time-domain boundary integral operators}
We intend to represent the solution in the unbounded domain by time-domain boundary integral potentials. To introduce some notation and properties of these we first consider the one-sided convolution
\[
K(\partial_t) g(t) = \int_0^t k(t-\tau) g(\tau)\bl{d\tau},
\]
where the kernel $k$ is given via its Laplace transform 
\[
K(s) = \mathscr{L}\{ k\}(s) = \int_0^\infty e^{-st}K(s) ds \qquad \Re s >0.
\]
If the kernel $k$ and the data $g$ are integrable, the meaning of the \bl{convolution} is clear. However often this is not the case and instead we only know that $K$ is analytic for $\Re s> 0$ and satisfies the bound 
\begin{equation}
  \label{eq:Kbound}
|K(s)| \leq C(\sigma) |s|^\mu ,\qquad \forall \Re s \geq \sigma > 0.  
\end{equation}
In this case, following \cite{Lub94}, the convolution is defined by 
\[
K(\partial_t)g(t) := \mathscr{L}^{-1}\{ K G\}(t) = \frac1{2\pi\mi}\int_{\sigma+\mi \R} e^{st} K(s) G(s) ds,
\]
where $G$ is the Laplace transform of data $g$, $\mathscr{L}^{-1}$ is the inverse Laplace transform, and the operational notation $K(\partial_t)$ emphasizes the importance of the kernel in the Laplace domain. Note that if $g \in C^m(\bl{[0,\infty)})$ \bl{and its derivatives upto order $m$} are polynomially bounded  with $g(0) = g'(0) = \dots = g^{(m-1)}(0) = 0$, then
\[
\left|\mathscr{L} g (s) \right| \leq |s|^{-m} \int_0^\infty e^{-\sigma t} g^{(m)}(t) dt.
\]
If $m > \mu+1$, the inverse Laplace transform and the Cauchy integral formula imply that  $K(\partial_t) g \in C(\bl{[0,\infty)})$ and $K(\partial_t) g(0) = 0$. 
If $g$ is only defined on a finite interval $[0,T]$, we can extend it by the Taylor polynomial $\sum_{j = 0}^m \frac1{j!}g^{(j)}(T)(t-T)^m$ to $t > T$ and again define $K(\partial_t)g$ via the Laplace domain as above.

For $K(s) = s$ and sufficiently smooth $g$, $K(\partial_t) g = \partial_t g$ is indeed the time-derivative justifying the operational notation. Furthermore, for sufficiently smooth $g$ and operators $K_1$ and $K_2$ satisfying bound of the form \eqref{eq:Kbound}, the composition property $K_2K_1(\partial_t) g = K_2(\partial_t) K_1(\partial_t) g$ holds.

The theory of time domain boundary integral operators seemlesly fits into this framework; see \cite{Lub94}. Namely, the Laplace domain single layer and double layer potentials are given by
\[
S(s) \varphi (x) :=  \int_\Gamma \mathcal{K}(|x-y|,s) \varphi(y) d\Gamma_y  \qquad x \in \R^d \setminus \Gamma
\]
and
\[
D(s) \varphi (x) :=  \int_\Gamma\left[\partial_{\nu_y} \mathcal{K}(|x-y|,s)\right] \varphi(y)d\Gamma_y \qquad x \in \R^d \setminus \Gamma,
\]
with the kernel $\mathcal{K}(r,s)$  given by
\[
\mathcal{K}(r,s) :=  \left\{
  \begin{array}{cc}
    \displaystyle \frac1{2\pi}K_0(sr)& d = 2\vspace{.1cm}\\
    \displaystyle\frac{e^{-sr}}{4\pi r}& d = 3.
  \end{array}
\right.,
\]
\bl{where $K_0(\cdot)$ is a modified Bessel function \cite[Chapter 10]{NIST}.}
The single and double-layer potentials have the mapping properties
\[
S(s) \colon H^{-1/2}(\Gamma) \to H^1(\R^d \setminus \Gamma), \qquad
D(s) \colon H^{1/2}(\Gamma) \to H^1(\R^d \setminus \Gamma)
\]
and satisfy bounds
\[
\|S(s)\| \leq C(\sigma) \frac{|s|}{\Re s}, \quad
\|D(s)\| \leq C(\sigma) \frac{|s|^{3/2}}{\Re s},\qquad \Re s\geq \sigma >0.
\]
For these and other estimates in the Laplace domain see the original papers of Bamberger and Ha Duong  \cite{BaHa:1986a,BaHa:1986b}, crucial progress made in \cite{LaSa:2009a} and the books \cite{lbbook,Sayas:2016}.

Hence, as described in the discussion on one-sided convolutions, the time-domain single layer and double layer time domain boundary integral potentials $S(\partial_t)$ and $D(\partial_t)$ are well-defined.  For sufficiently smooth data $\varphi$, $\psi$ they are given by the explicit formula
\[
S(\partial_t) \varphi (x,t) := \int_0^t \int_\Gamma k(|x-y|,t-\tau) \varphi(y,\tau)d\Gamma_y d \tau \qquad x \in \R^d \setminus \Gamma
\]
and
\[
D(\partial_t) \psi (x,t) := \int_0^t \int_\Gamma\left[\partial_{\nu_y} k(|x-y|,t-\tau)\right] \psi(y,\tau)d\Gamma_y d \tau \qquad x \in \R^d \setminus \Gamma,
\]
where the kernel $k$ is given by
\[
k(r,t) :=  \left\{
  \begin{array}{cc}
    \displaystyle\frac{H(t-r)}{2\pi\sqrt{t^2-r^2}}& d = 2\vspace{.1cm}\\
    \displaystyle\frac{\delta(t-r)}{4\pi r}& d = 3,
  \end{array}
\right.
\]
and $\delta(\cdot)$ and $H(\cdot)$ are the Dirac delta and Heaviside distributions respectively. Note that the kernel of the Laplace domain operators $\mathcal{K}$ is the Laplace transform of $k$.

If $U$ solves the Laplace transformed wave equation 
\[
s^2 U -\Delta  U = 0 \qquad \text{in } \Omega^+,
\]
then Kirchhoff's representation formula gives $u$ in terms of boundary integral potentials
\[
U = -S(s)\partial_\nu^+ U +D(s) \gamma^+ U. 
\]
\bl{The} corresponding formula holds in the time-domain, and we represent $\usc$ as a combination of boundary integral potentials using the Kirchhoff formula
\begin{equation}
  \label{eq:u_repr_FB}  
\usc = S(\partial_t)\varphi + \partial_t^{-1}D(\partial_t) \psi,
\end{equation}
where  the densities are exterior traces of the scattered field
\begin{equation}
  \label{eq:traces_FB}
 \varphi = -\partial^+_\nu \usc, \qquad \psi = \gamma^+\partial_t \usc.
\end{equation}
\bl{The densities $\varphi$ and $\psi$ will be the unkowns in addition to the interior solution $u$. } The reason for using this particular form of the densities $\varphi$ and $\psi$ will will only become apparent  once we obtain a convenient boundary-field formulation of the transmission problem.

\bl{Next, by taking appropriate traces of the boundary potentials we obtain the boundary integral operators.} First, we define the boundary average
\[
\sa{\cdot} = \tfrac12(\gamma^+\cdot+\gamma^-\cdot)
\]
and the boundary jump
\[
\sj{\cdot} = \gamma^+\cdot-\gamma^-\cdot
\]
and recall the jump properties of the layer potentials 
\begin{equation}
  \label{eq:jump_prop}
  \begin{split}
    \sj{S(\partial_t)\varphi} &= 0,\\
    \sj{\partial_\nu S(\partial_t)\varphi} &= -\varphi,\\
    \sj{D(\partial_t)\psi} &= \psi,\\
    \sj{\partial_\nu D(\partial_t)\psi} &= 0.
  \end{split}
\end{equation}
Finally we define four boundary integral operators 
\[
\begin{split}
V(\partial_t)\varphi &= \sa{S(\partial_t)\varphi} = \gamma^{\pm}S(\partial_t)\varphi\\
K(\partial_t)\psi &= \sa{D(\partial_t)\psi}\\
K^t(\partial_t)\varphi &= \sa{\partial_\nu S(\partial_t)\varphi}\\
W(\partial_t)\psi &= -\sa{\partial_\nu D(\partial_t)\psi} = -\partial^{\pm}_\nu D(\partial_t)
\end{split}
\]
where $V$, $K$, $K^t$, $W$ are the single layer, double layer, transposed double layer and hypersingular boundary integral operators. Note that
\[
\begin{split}
  V(s) \colon& H^{-1/2}(\Gamma) \rightarrow H^{1/2}(\Gamma)\\
  K(s) \colon& H^{1/2}(\Gamma) \rightarrow H^{1/2}(\Gamma)\\
  K^t(s) \colon& H^{-1/2}(\Gamma) \rightarrow H^{-1/2}(\Gamma)\\
 W(s) \colon& H^{1/2}(\Gamma) \rightarrow H^{-1/2}(\Gamma)
\end{split}
\]
and that each of the operators satisfies a bound of the form \eqref{eq:Kbound} with $\mu = 1, 3/2, 3/2, 2$ respectively; see \cite{Sayas:2016}.

The above definitions and jump properties imply
\[
\gamma^+ D(\partial_t) \psi = \frac12 \psi + K(\partial_t) \psi
\]
and
\[
\partial_\nu^+ S(\partial_t) \varphi = -\frac12 \varphi + K^t(\partial_t) \varphi.
\]
In particular, differentiating  \eqref{eq:u_repr_FB} in time, taking the trace 
gives
\[
\gamma^+\partial_t \usc = \partial_t V(\partial_t)\varphi +  \frac12 \psi+K(\partial_t) \psi  
\]
\bl{and applying the transmission condition \eqref{eq:trace_cond1}
\begin{equation}
  \label{eq:bie1}
-\partial_t \beta_0=  -\gamma^-\partial_t \usc +\partial_t V(\partial_t)\varphi +  \frac12 \psi+K(\partial_t) \psi.  
\end{equation}}
Further, taking the normal trace of \eqref{eq:u_repr_FB}
gives
\begin{equation}
  \label{eq:bie2}
- \frac12 \varphi -K^t(\partial_t)\varphi + \partial_t^{-1}W(\partial_t) \psi = 0.  
\end{equation}

\bl{Finally, testing the strong formulation \eqref{eq:interior} in the interior by $v \in H^1(\Omega^-)$  and using the transmission condition \eqref{eq:trace_cond1} we obtain the weak formulation in the interior
\[
\Inner{c^{-2}\partial_t^2 u}{ v}+\Inner{\kappa \nabla u}{\nabla v}
+\inner{\varphi}{\gamma^- v} = \Inner{f}{v}+\inner{\beta_1}{\gamma^- v}.
\]
}
\bl{Thus, combining this with the boundary integral equations \eqref{eq:bie1} and \eqref{eq:bie2}}, the boundary/field formulation of the scattering problem reads: Find $u(t) \in \Hin$, $\varphi \in H^{-1/2}(\Gamma)$, $\psi \in H^{1/2}(\Gamma)$ such that for almost all $t \in [0,T]$
\begin{subequations}\label{eq:prob}
  \begin{equation}
    \label{eq:pde_weak}
\Inner{c^{-2}\partial_t^2 u}{ v}+\Inner{\kappa \nabla u}{\nabla v}
+\inner{\varphi}{\gamma^- v} = \Inner{f}{v}+\inner{\beta_1}{\gamma^- v} 
  \end{equation}
  \begin{equation}
    \label{eq:bie_weak}
    \begin{pmatrix}
 -\gamma^-\partial_t u\\
0     
    \end{pmatrix}
+
\begin{pmatrix}
 0& \frac12  I \\
-\frac12 I &0
\end{pmatrix}
\begin{pmatrix}
  \varphi \\\psi
\end{pmatrix}
 + B(\partial_t) \begin{pmatrix}
  \varphi \\\psi
\end{pmatrix}
 =
 \begin{pmatrix}
-\partial_t \beta_0\\0
 \end{pmatrix}
  \end{equation}
\end{subequations}
for all $v \in H^1(\Omega^-)$, where the second equality is understood in $H^{1/2}(\Gamma) \times H^{-1/2}(\Gamma)$, the initial data are given by
\[
u(0) = u_0 \;(\text{in } H^1(\Omega^-)) \qquad \partial_t u(0) = v_0  \;(\text{in } L^2(\Omega^-)), 
\]
and $B$ is the Calder\'on operator 
\begin{equation}
  \label{eq:calderon}
B(\bl{\partial_t}) = 
\begin{pmatrix}
  \bl{\partial_t} V(\bl{\partial_t}) & K(\bl{\partial_t})\\
-K^{t}(\bl{\partial_t}) & \bl{\partial_t}^{-1}W(\bl{\partial_t})
\end{pmatrix}.
\end{equation}
The bounds on the constituent operators in the definition of $B$ imply that the operator $B(s)\colon H^{-1/2}(\Gamma) \times H^{1/2}(\Gamma) \to H^{1/2}(\Gamma) \times H^{-1/2}(\Gamma)$ itself satisfies a bound of the form \eqref{eq:Kbound}:
\begin{equation}
  \label{eq:Bcont}
  \|B(s)\| \leq C(\sigma) |s|^2 \qquad \Re s\geq \sigma > 0.
\end{equation}

Next, we state a crucial property of the Calder\'on operator in the frequency domain; for a proof see \cite{BaLuSa:2015}. 

\begin{lemma}\label{lem:calderon_pos}
There exists $\beta > 0$ so that the Calder\'on operator \eqref{eq:calderon} satisfies
\[
\Re \left \langle
  \begin{pmatrix}
      \varphi \\ \psi
  \end{pmatrix}, B(s) 
  \begin{pmatrix}
      \varphi \\ \psi
  \end{pmatrix}
\right \rangle_\Gamma
 \geq \beta\, \min(1,|s|^2) \frac{\Re s}{|s|^2} \left(\|\varphi\|^2_{-1/2,\Gamma} + \|\psi\|^2_{1/2,\Gamma}\right)
\]
for  $\Re s > 0$ and for all $\varphi \in H^{-1/2}(\Gamma)$ and $\psi \in H^{1/2}(\Gamma)$.
\end{lemma}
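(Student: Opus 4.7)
The plan is to represent the pair $(\varphi,\psi)$ by a potential on $\R^d\setminus\Gamma$, identify $\Re\inner{(\varphi,\psi)^\top}{B(s)(\varphi,\psi)^\top}$ with a weighted $s$--energy norm of that potential via Green's identity, and then invert the trace map using the classical trace inequalities, paying attention to $s$--weights.

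The first step is to set $U := S(s)\varphi+s^{-1}D(s)\psi$ in $\R^d\setminus\Gamma$, which satisfies $s^2U-\Delta U=0$ in $\Omega^\pm$ and decays at infinity. The jump identities \eqref{eq:jump_prop} yield
\[
  \sj{U}=s^{-1}\psi,\quad \sj{\partial_\nu U}=-\varphi,\quad
  \sa{U}=V(s)\varphi+s^{-1}K(s)\psi,\quad
  \sa{\partial_\nu U}=K^t(s)\varphi-s^{-1}W(s)\psi.
\]
Applying Green's identity in $\Omega^+$ and in $\Omega^-$ (the exterior outward normal is $-\nu$) and summing gives
\[
  \int_{\R^d\setminus\Gamma}\!\bigl(|\nabla U|^2+s^2|U|^2\bigr)\,dx
  = -\inner{\sa{\partial_\nu U}}{\sj{U}}-\inner{\sj{\partial_\nu U}}{\sa{U}}.
\]

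Next I would multiply by $\bar s$, substitute the averages and jumps above, and compare with $\inner{(\varphi,\psi)^\top}{B(s)(\varphi,\psi)^\top}$; the difference collapses to two expressions of the form $z-\bar z$ (arising from the $K/K^t$ pair and from the two $s^{\pm 1}$--weighted $W$ terms), so it is purely imaginary. Taking real parts and using $\Re(\bar s)=\Re(s)$ and $\Re(\bar s\, s^2)=|s|^2\Re(s)$ produces the energy identity
\[
  \Re\inner{\begin{pmatrix}\varphi\\\psi\end{pmatrix}}{B(s)\begin{pmatrix}\varphi\\\psi\end{pmatrix}}
  = \Re(s)\bigl(\|\nabla U\|^2_{\R^d\setminus\Gamma}+|s|^2\|U\|^2_{\R^d\setminus\Gamma}\bigr).
\]

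Finally I would invert the map $U\mapsto(\varphi,\psi)$. Since $\Delta U=s^2U$ in $\Omega^\pm$, the classical normal--trace inequality applied to $\varphi=-\sj{\partial_\nu U}$ gives $\|\varphi\|_{-1/2,\Gamma}^2\lesssim\|\nabla U\|^2+|s|^4\|U\|^2$, and the Dirichlet--trace inequality applied to $\psi=s\sj{U}$ gives $\|\psi\|_{1/2,\Gamma}^2\lesssim |s|^2(\|\nabla U\|^2+\|U\|^2)$. Both are dominated by $C\max(1,|s|^2)\bigl(\|\nabla U\|^2+|s|^2\|U\|^2\bigr)$; using $1/\max(1,|s|^2)=\min(1,|s|^2)/|s|^2$ and combining with the energy identity yields the claim with $\beta=1/C$. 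The step I expect to be trickiest is this last one: the $s$--scalings in the Dirichlet and Neumann trace bounds have to balance so that a single common factor $\max(1,|s|^2)$ controls both densities simultaneously, without losing a power of $|s|$ in either the small-- or large--$|s|$ regime---this is precisely what produces the factor $\min(1,|s|^2)$ on the right--hand side of the lemma.
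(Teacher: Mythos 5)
Your argument is correct: the Green identity over $\R^d\setminus\Gamma$ with the jump relations, the observation that the discrepancy between $\bar s$ times that identity and $\langle(\varphi,\psi)^\top,B(s)(\varphi,\psi)^\top\rangle_\Gamma$ is purely imaginary, and the $s$-weighted Dirichlet and normal trace bounds combine exactly as you describe to give $\beta=1/C$ with $C$ depending only on $\Gamma$. The paper itself offers no proof and simply cites \cite{BaLuSa:2015}; your argument is essentially the proof given there (representation via $U=S(s)\varphi+s^{-1}D(s)\psi$, Green's identity, inversion of the traces), so it matches the intended approach.
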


\bl{Let us briefly consider the existence and uniqueness of the solution to the coupled system \eqref{eq:prob}.  Following \cite{Sayas:2016}, for a Banach space $X$, let us denote by $TD(X)$ the space of causal, tempered distributions with polynomially bounded Laplace transform. Namely, for $f \in TD(X)$, there exists a non-increasing function $C_F(x) > 0$ with $C_F(x) \leq C x^{-m}$ for some $m$ and $x \in (0,1)$, such that 
 \[
 \|\mathcal{L}\{f\}(s)\|_X \leq C(\sigma) |s|^m, \qquad \Re s \geq \sigma > 0.
 \]
For vanishing $f$, $u_0$ and $v_0$, the existence and uniqueness of the solution of \eqref{eq:prob}  can be shown  by means of the Laplace transform and the above coercivity result under very weak conditions on the smoothness of the data; for details see \cite[Chapter 7]{lbbook} and also \cite[Proposition~2.1]{FJS_FEMBEM}. Namely, if $\beta_0 \in TD(H^{1/2}(\Gamma))$, $\beta_1 \in TD(H^{-1/2}(\Gamma))$, there exists a unique solution $u \in TD(\Hin)$, $\varphi \in TD(H^{-1/2}(\Gamma))$, $\psi \in TD(H^{1/2}(\Gamma))$ of \eqref{eq:prob}.

For vanishing $\beta_0$ and $\beta_1$, we first let $\tilde \Omega_T$ be a bounded, Lipschitz domain such that $\Omega^- \subset \tilde \Omega_T$ and
\[
\operatorname{dist}(\partial\Omega^-,\partial\tilde \Omega_T) > T.
\]
The existence of the unique weak solution $\tilde u(t) \in H^1_0(\tilde \Omega_T)$ of
\[
\partial_t^2 \tilde u + L \tilde u = f\qquad   
\]
where
\[
L \tilde u = \left\{
  \begin{array}{cc}
-\div \kappa \nabla \tilde u & \text{in } \Omega^-\\
-\Delta \tilde u & \text{in } \Omega^+
  \end{array}
\right.
\]
for $f \in L^2(0,T; L^2(\tilde \Omega_T))$ with initial data $\tilde u(0) = u_0 \in H^1_0(\tilde\Omega_T)$, $\partial_t \tilde u(0) = v_0 \in L^2(\tilde\Omega_T)$ (with $u_0$ and $v_0$ extended by $0$ to $\tilde \Omega_T$) follows by classical means, see, e.g., \cite{Evans}. Note that due to the finite propagation of waves and the choice of $\tilde \Omega_T$, $\tilde u(t)  \equiv 0$ in a vicinity of $\partial \tilde \Omega_T$ for $t < T$. Setting  $u = \tilde u|_{\Omega^-}$, $\varphi = -\partial^+_\nu u$, and $\psi = \gamma^+\partial_t u$ gives a solution of \eqref{eq:prob}. Uniqueness again following by the Laplace transform as in \cite{FJS_FEMBEM,lbbook}. 

The above shows existence and uniquenness of the solution to the coupled problem \eqref{eq:prob}. Much stronger requirements on the smoothness of both data and solution will be made in later section to analyse the convergence of numerical methods. }

\section{Time-discretization of time-domain boundary integral operators}

Convolution quadrature (CQ) is a time-discretization of convolutions $K(\partial_t)g$ based on an A-stable linear multistep method for symbols $K$ satisfying a bound \eqref{eq:Kbound}; see \cite{Lub94}.   Just like the one-sided convolution, it is defined via the Laplace domain. Namely, given a fixed time-step $\tstep > 0$ we define
\begin{equation}
  \label{eq:CQ_def}  
K(\partial_t^{\tstep}) g(t) :=  \frac1{2\pi\mi}\int_{\sigma+\mi \R} e^{st} K(s^{\tstep}) G(s) ds,
\end{equation}
where
\begin{equation}
  \label{eq:sdt}  
s^{\tstep} = \frac{\delta(e^{-s \tstep})}{\tstep}
\end{equation}
and $\delta(\zeta)$ is a generating function of an A-stable linear multistep method. Namely, we assume that   \bl{the method is of order $p \geq 1$}
\begin{equation}
  \label{eq:delta_order}  
\delta(e^{-z}) = z+\O(z^{p+1})
\end{equation}
and \bl{is A-stable}
\begin{equation}
  \label{eq:Astab}  
\Re \delta(\zeta) > 0 \qquad \text{for } |\zeta| < 1.
\end{equation}
As the Dahlquist's second barrier tells us that $p \leq 2$, we will mainly be concerned with the following second order methods: the second order backward difference formula (BDF2)
\[
\dbdf(\zeta) = (1-\zeta)+\frac12(1-\zeta)^2
\]
and the trapezoidal rule
\[
\dtr(\zeta) = 2\frac{1-\zeta}{1+\zeta} = \sum_{j = 0}^\infty 2^{-j}(1-\zeta)^{j+1}.
\]
As we will look at another class of linear multistep methods, we state the assumptions they need to satisfy.

\begin{assumption}\label{ass:delta}
  Let $\delta(\zeta)$ satisfy \eqref{eq:delta_order} for $p \geq 1$ and \eqref{eq:Astab}. Furthermore, let $\delta(\zeta)$  be either analytic for $|\zeta| \leq 1$ or \bl{be} the generating function of the trapezoidal rule.
\end{assumption}

For a sufficiently smooth $g$, convolution quadrature inherits the approximation property of the underlying linear mulstistep method. To present the required smoothness we need the space
\begin{equation}
  \label{eq:Wspace}
  \begin{split}    
W_0^m(\R) \coloneqq \{&g \in C^{m-1}(\R)\; :\; g \equiv 0 \text{ in } (-\infty,0), \\&g \text{ polynomially bounded }, g^{(m)} \in L^1_{\text{loc}}(\R)\}.
  \end{split}
\end{equation}
For $g \in W_0^m(\R)$ we then have 
\[
K(\partial_t)g-K(\partial_t^{\tstep})g = \O(\tstep^p),
\]
with $m > \max(2\mu+3,\mu+4)$ for the trapezoidal rule and $m > \mu+p+2$ for other methods satisfying Assumption~\ref{ass:delta}. For the proof of this result see \cite{Lub94} (not including the trapezoidal rule for $\mu \geq 0$), \cite{FJStrap} (for the trapezoidal rule), and \cite{lbbook} (for all the cases). 

The above brief introduction to CQ does not indicate how to implement the method; for details and short codes see \cite{lbbook}. For our purposes here,  let us just note that $K(\partial_t^{\tstep})g$ is given by a discrete convolution
\[
K(\partial_t^{\tstep})g(t_n) = \sum_{j = 0}^n \omega_{n-j}(K) g(t_j).
\]
The convolution weights $\omega_j(K)$ can be expressed  by the contour integral
\[
\omega_j(K) = \frac1{2\pi \mi} \oint_{\mathcal{C}} K(\delta(\zeta)/\tstep) \zeta^{-j-1} d\zeta,
\]
with $\mathcal{C}$  a circle of radius $0 < \lambda < 1$. Discretizing the contour integral by the compound trapezoidal rule gives the approximation
\begin{equation}
  \label{eq:omega_quad}
 \omega_j(K) 
 \approx \frac{\kr^{-j}}{N+1} \sum_{\ell = 0}^N 
K\left(\frac{\delta(\kr\zeta_{N+1}^{-\ell})}{\Delta  t}\right)
\zeta_{N+1}^{\ell j},
\end{equation}
where  $\zeta_{N+1} = e^{\frac{2\pi \mi}{N+1}}$ and $0 < \kr <1$.  The error commited is $\O(\lambda^{N+1})$ and all $N$ weights can be computed in $\O(N \log N)$ time using FFT.  For numerical stability reasons $\lambda$ is chosen greater than $\text{eps}^{\frac1{2N+1}}$ where $\text{eps}$ is the machine precision; see \cite{Lub94,lbbook}.

A crucial property of CQ is that it inherits the positivity property of the kind satisfied by the Calder\'on operator; for a proof see \cite{BaLuSa:2015}.

\begin{theorem}\label{thm:herglotz}
Let $K(s)$ be an analytic family for $\Re  s > 0$ of linear operators between a Hilbert space $X$ and its dual $X'$ satisfying the bound
\[
\|K(s)\| \leq C_0(\sigma) |s|^\mu \qquad \Re s \geq \sigma > 0
\]
for some $\mu \in \R$ and any $\sigma > 0$. Denoting by $\langle \cdot, \cdot \rangle$ the duality product, let 
for any $\sigma > 0$,
\begin{equation}
  \label{eq:Ks_pos}
  \langle \varphi, K(s)\varphi \rangle \geq C_1(\sigma)\|s^\eta\varphi\|_X^2 \qquad \forall \varphi \in X,
\end{equation}
 $\Re s \geq \sigma > 0$, and $C_1(x)$ a non-increasing function of $x$,   and some $\eta \in \R$. Considering CQ based on $\delta(\zeta)$ satisfying Assumption~\ref{ass:delta},  $\sigma \tstep > 0$ small enough, $\varrho = e^{-\sigma \tstep}$ and any finite series $\varphi_j \in X$ the following holds 
\[
\sum_{j = 0}^\infty \varrho^{2j}\left \langle \varphi_j, K(\partial_t^{\Delta t}) \varphi(t_j)\right\rangle \geq C_2(\sigma) \sum_{j = 0}^\infty \varrho^{2j} \|(\partial_t^{\Delta t})^\eta \varphi_j\|^2_X,
\]
with some positive contant $C_2(\sigma)$ depending on $\sigma$ and the choice of $\delta(\zeta)$. 
\end{theorem}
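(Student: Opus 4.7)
The plan is to reduce the discrete-time coercivity to a pointwise Laplace-domain coercivity via generating functions and a Parseval identity on the unit circle. Given a finite sequence $\{\varphi_j\}\subset X$, form the $X$-valued polynomial $\hat\varphi(\zeta) = \sum_{j\geq 0}\varphi_j \zeta^j$. By the defining property of CQ, the generating function of $(K(\partial_t^\tstep)\varphi)(t_n)$ is $K(\delta(\zeta)/\tstep)\hat\varphi(\zeta)$, and the one of $(\partial_t^\tstep)^\eta\varphi_n$ is $(\delta(\zeta)/\tstep)^\eta\hat\varphi(\zeta)$. Introducing $u_j := \varrho^j\varphi_j$ so that $\hat u(\zeta) = \hat\varphi(\varrho\zeta)$, and writing $s(\zeta) := \delta(\varrho\zeta)/\tstep$, the substitution $\zeta\mapsto\varrho\zeta$ followed by Parseval's identity yields
\begin{equation*}
\sum_{j\geq 0}\varrho^{2j}\left\langle\varphi_j, K(\partial_t^\tstep)\varphi(t_j)\right\rangle = \frac{1}{2\pi}\int_0^{2\pi}\left\langle\hat u(e^{\mi\theta}), K(s(e^{\mi\theta}))\hat u(e^{\mi\theta})\right\rangle d\theta
\end{equation*}
together with the companion identity
\begin{equation*}
\sum_{j\geq 0}\varrho^{2j}\|(\partial_t^\tstep)^\eta\varphi_j\|_X^2 = \frac{1}{2\pi}\int_0^{2\pi}|s(e^{\mi\theta})|^{2\eta}\|\hat u(e^{\mi\theta})\|_X^2 d\theta.
\end{equation*}

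Next I would insert hypothesis \eqref{eq:Ks_pos} pointwise in $\theta$ under the first integral. For this, a uniform lower bound $\Re s(e^{\mi\theta}) \geq c\sigma$ is needed. For $\delta$ analytic on $\overline{\{|\zeta|\leq 1\}}$, A-stability gives $\Re\delta(\zeta)\geq 0$ on $|\zeta|=1$ with the only zero at $\zeta=1$, where $\delta(\zeta) = (1-\zeta) + \O((1-\zeta)^2)$ by \eqref{eq:delta_order}; a compactness argument then yields $\Re\delta(\varrho e^{\mi\theta})\gtrsim 1-\varrho\gtrsim \sigma\tstep$ uniformly once $\sigma\tstep$ is sufficiently small, and hence $\Re s \gtrsim \sigma$. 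For the trapezoidal rule, an explicit computation gives
\begin{equation*}
\Re \dtr(\varrho e^{\mi\theta}) = \frac{2(1-\varrho^2)}{|1+\varrho e^{\mi\theta}|^2},
\end{equation*}
whose minimum over $\theta$ is attained at $\theta = 0$ and equals $(1-\varrho)/2\gtrsim \sigma\tstep$. Monotonicity of $C_1$ then gives $C_1(\Re s(e^{\mi\theta})) \geq C_1(c\sigma)$. Taking the real part of the first integrand, invoking \eqref{eq:Ks_pos}, and re-assembling via the second Parseval identity yields the claim with $C_2(\sigma) = C_1(c\sigma)$.

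The main technical obstacle is accommodating the trapezoidal rule: $\dtr$ has a pole at $\zeta = -1$, so $s(e^{\mi\theta})$ is unbounded as $\varrho\to 1$, and for general $\eta$ the fractional symbol $s^\eta$ must be defined via a branch cut. Since $\Re\dtr > 0$ on $|\zeta|<1$, the principal branch of $s^\eta$ is well-defined and analytic there, so the CQ $(\partial_t^\tstep)^\eta$ and the Parseval identities remain valid. The potentially singular growth of $|s(e^{\mi\theta})|^{2\eta}$ near $\theta = \pi$ appears identically on both sides of the resulting inequality, so no integrability or cancellation issue arises; this is precisely why the trapezoidal rule is admitted as a separate case in Assumption~\ref{ass:delta}.
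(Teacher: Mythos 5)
Your proposal follows essentially the same route as the proof the paper points to in \cite{BaLuSa:2015} (see also the remark after Lemma~\ref{lem:parseval} that both results are proved with Parseval's formula): pass to generating functions, use Parseval on the circle of radius $\varrho$, insert the Laplace-domain coercivity \eqref{eq:Ks_pos} pointwise at $s=\delta(\varrho e^{\mi\theta})/\tstep$, and reassemble the right-hand side using $|s^\eta|=|s|^\eta$; the identification of the generating functions of $K(\partial_t^{\tstep})\varphi$ and $(\partial_t^{\tstep})^\eta\varphi$ and the resulting two Parseval identities are correct (the finiteness of the sequence makes all convergence questions trivial, and \eqref{eq:Ks_pos} should of course be read as a bound on the real part of the pairing).

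The one step whose justification does not hold up as written is the uniform bound $\Re\delta(\varrho e^{\mi\theta})\gtrsim 1-\varrho$ in the analytic case. First, A-stability together with \eqref{eq:delta_order} does not imply that $\zeta=1$ is the only zero of $\Re\delta$ on the unit circle; that is an extra, unverified claim. Second, even granting it, the argument ``expansion $\delta(\zeta)=(1-\zeta)+\O((1-\zeta)^2)$ near $\zeta=1$ plus compactness away from $\zeta=1$'' leaves the intermediate regime $\sqrt{1-\varrho}\lesssim|\theta|\lesssim 1$ uncontrolled: there the $\O((1-\zeta)^2)$ remainder is of size $|\theta|^2$ and can swamp the gain $\Re(1-\varrho e^{\mi\theta})-(1-\varrho)=\varrho(1-\cos\theta)$, so the local expansion alone does not yield the claimed lower bound (for BDF2 one is saved only by the exact identity $\Re\dbdf(\varrho e^{\mi\theta})=(1-\varrho\cos\theta)^2+\tfrac12(1-\varrho^2)$). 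The clean repair, which covers every $\delta$ admitted by Assumption~\ref{ass:delta} at once, is Harnack's inequality (equivalently the Poisson-kernel bound) for the positive harmonic function $\Re\delta$ on the open unit disk: $\Re\delta(\varrho e^{\mi\theta})\ge\Re\delta(0)\,\frac{1-\varrho}{1+\varrho}$, and $\Re\delta(0)>0$ for BDF2, the truncated trapezoidal rule and the trapezoidal rule alike; your explicit trapezoidal computation is also fine, except that the minimum over $\theta$ is $2(1-\varrho)/(1+\varrho)$ rather than $(1-\varrho)/2$, which is immaterial. With that replacement, monotonicity of $C_1$ gives the theorem with $C_2(\sigma)=C_1(c\sigma)$ exactly as you conclude.
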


A related result we will need is stated in the next lemma. It has been used in the proofs of \cite{BaLuSa:2015}, see also \cite{lbbook}.

\begin{lemma}\label{lem:parseval}
  Under the conditions of the previous theorem, given finite sequences $\varphi_j \in X$ and $\psi_j \in X'$, and $\sigma > 0$, for any $\varrho \in (0,1)$ the following holds
\[
\sum_{j = 0}^\infty \varrho^{2j} |\langle \varphi_j,\psi_j\rangle| \leq
\frac12\sum_{j = 0}^{\infty} \varrho^{2j}\left( \|(\partial_t^{\tstep})^{-1} \varphi(t_j)\|_{X'}^2+\|\partial_t^{\tstep} \psi(t_j)\|_X^2\right).
\]
\end{lemma}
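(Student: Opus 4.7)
The plan is a discrete Parseval argument on the circle $|\zeta|=\varrho$ in the generating-function (or $\mathcal Z$-transform) domain, exploiting that convolution quadrature acts multiplicatively on generating functions via the scalar symbol $\delta(\zeta)/\tstep$, whose quotient with its conjugate has unit modulus.

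Concretely, I would set $a_j:=(\partial_t^{\tstep})^{-1}\varphi(t_j)$ and $b_j:=\partial_t^{\tstep}\psi(t_j)$ and introduce the generating functions $\tilde\varphi(\zeta)=\sum_{j\ge 0}\varphi_j\zeta^j$ and analogously $\tilde\psi,\tilde a,\tilde b$. Definition \eqref{eq:CQ_def} of CQ furnishes the factorizations $\tilde\varphi(\zeta)=(\delta(\zeta)/\tstep)\,\tilde a(\zeta)$ and $\tilde\psi(\zeta)=(\tstep/\delta(\zeta))\,\tilde b(\zeta)$; Assumption~\ref{ass:delta} gives $\Re\delta>0$ on $|\zeta|<1$, so $\delta$ has no zeros there and both factorizations are analytic in a neighborhood of $|\zeta|\le\varrho$ (for the trapezoidal rule the pole of $1/\delta$ at $\zeta=-1$ sits safely outside $|\zeta|=\varrho<1$). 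The Parseval identity on $\zeta=\varrho e^{i\theta}$ combined with sesquilinearity of the pairing then yields
\[
\sum_{j\ge 0}\varrho^{2j}\langle\varphi_j,\psi_j\rangle=\frac{1}{2\pi}\int_0^{2\pi}\bigl\langle\tilde\varphi(\varrho e^{i\theta}),\tilde\psi(\varrho e^{i\theta})\bigr\rangle\,d\theta,
\]
and substituting the factorizations extracts a scalar quotient $M(\theta):=[\delta(\varrho e^{i\theta})/\tstep]/\overline{[\delta(\varrho e^{i\theta})/\tstep]}$ with $|M(\theta)|\equiv 1$. Taking the modulus under the integral, applying Cauchy--Schwarz for the $X,X'$ duality, then Cauchy--Schwarz on $[0,2\pi]$, and invoking Parseval in reverse gives
\[
\Bigl|\sum_{j\ge 0}\varrho^{2j}\langle\varphi_j,\psi_j\rangle\Bigr|\le \Bigl(\sum_{j\ge 0}\varrho^{2j}\|a_j\|_{X'}^2\Bigr)^{1/2}\Bigl(\sum_{j\ge 0}\varrho^{2j}\|b_j\|_X^2\Bigr)^{1/2},
\]
and AM--GM ($2xy\le x^2+y^2$) converts the product into the half-sum on the right-hand side of the lemma.

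The main obstacle is reconciling the term-wise absolute value $|\langle\varphi_j,\psi_j\rangle|$ asserted on the left with the modulus-outside-the-sum form that the Parseval argument produces natively. Because $\partial_t^{\tstep}$ is a nontrivial convolution, the sign $\epsilon_j=\operatorname{sign}\langle\varphi_j,\psi_j\rangle$ cannot be absorbed into $\varphi_j$ or $\psi_j$ without disturbing every higher-indexed term of $(\partial_t^\tstep)^{-1}\varphi$ and $\partial_t^\tstep\psi$, so the naive sign-flipping trick fails. I would therefore try to strengthen the intermediate estimate to a term-wise bound by combining the pointwise duality estimate $|\langle\varphi_j,\psi_j\rangle|\le\|\varphi_j\|_{X'}\|\psi_j\|_X$ with a discrete Cauchy--Schwarz and a Parseval identity that again exploits the $|M|\equiv 1$ cancellation between the factors $|\delta/\tstep|^2$ and $|\delta/\tstep|^{-2}$ appearing in $\|\tilde\varphi\|_{X'}^2$ and $\|\tilde\psi\|_X^2$. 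If the sharp constant $1/2$ is lost along this route, the natural reading of the lemma is the one with the modulus placed outside the sum, which is in any case the form invoked in the Section~5 energy estimates.
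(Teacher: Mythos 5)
Your argument is exactly the one the paper has in mind: the paper gives no proof beyond the remark that the lemma ``is proved using Parseval's formula'' (with references to \cite{BaLuSa:2015,lbbook}), and your generating-function computation --- factoring out the scalar symbol $\delta(\zeta)/\tstep$, observing that $(\delta/\tstep)\cdot\overline{(\tstep/\delta)}$ has unit modulus on $|\zeta|=\varrho$, then applying the duality and integral Cauchy--Schwarz inequalities, Parseval in reverse, and AM--GM --- is a correct and complete execution of it, yielding the estimate with the modulus \emph{outside} the sum. Your hesitation about the placement of the absolute value is justified, and you should trust it rather than hedge: the term-wise version as printed is false. Take $X=X'=\C$ with the standard pairing, $\delta(\zeta)=1-\zeta$ (backward Euler, which satisfies Assumption~\ref{ass:delta}), $\tstep=1$, $\varphi=(1,-1,0,\dots)$ and $\psi=(1,1,0,\dots)$; then $(\partial_t^{\tstep})^{-1}\varphi=(1,0,0,\dots)$ and $\partial_t^{\tstep}\psi=(1,0,-1,0,\dots)$, so the left-hand side with the modulus inside equals
\[
1+\varrho^2 \;>\; 1+\tfrac12\varrho^4
\]
for every $\varrho\in(0,1)$, while the right-hand side of the lemma equals $1+\tfrac12\varrho^4$. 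The modulus-outside form is the one actually invoked in the proof of Theorem~\ref{thm:stab} (it is applied there to the truncated sequences $\varphi^{h,m},\psi^{h,m}$ precisely so the finite sum can be rewritten as an infinite one before the modulus is taken), so nothing downstream is affected; only the statement needs the modulus moved outside the sum (and, incidentally, the labels $\|\cdot\|_X$ and $\|\cdot\|_{X'}$ swapped to match $\varphi_j\in X$, $\psi_j\in X'$, as your own duality step implicitly requires).
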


Both results are proved using Parseval's formula.


\section{\bl{Fully discrete system and convergence analysis}}

To present the full discretization it will be useful to define the second order central difference approximations of the first and second time derivative:
\[
[Du]_n :=  \frac1{2\tstep}(u_{n+1}-u_{n-1}), \qquad
[D^2u]_n :=  \frac1{\tstep^2}(u_{n+1}-2u_n+u_{n-1}).
\]
For the discretization in space we let $\bl{X_h} \subset H^1(\Omega)$, $\bl{X_h^{-\frac12}}  \subset H^{-1/2}(\Gamma)$, $\bl{X_h^{\frac12}} \subset H^{1/2}(\Gamma)$ be families of subspaces indexed by the spatial meshwidth $h > 0$. We assume that $\bl{X_h}$ satisfies the inverse inequality
\begin{equation}
  \label{eq:inverse_ineq}
  \|\kappa^{1/2}\nabla u\|_{\Omega} \leq \Cinv h^{-1}\|u\|_\Omega \qquad \text{for all } u \in X_h
\end{equation}
for some $\Cinv > 0$. 

The Galerkin discretization of the Calder\'on operator  denoted by $B_h(s) \colon X_h^{-\frac12} \times X_h^{\frac12} \to (X_h^{-\frac12})' \times (X_h^{\frac12})'$ is defined by
\[
\inner{
  \begin{pmatrix}
    z\\w
  \end{pmatrix}}
{B_h(s)
  \begin{pmatrix}
    \eta \\ \mu
  \end{pmatrix}
} = 
\inner{
  \begin{pmatrix}
    z\\w
  \end{pmatrix}}
{B(s)
  \begin{pmatrix}
    \eta \\ \mu
  \end{pmatrix}
} \qquad \forall z \in X_h^{-\frac12}, w \in X_h^{\frac12}.
\]
Furthermore, we denote by 
\[
\Pi_h^2 \colon H^{1/2}(\Gamma) \to (X_h^{-\frac12})' \quad
\Pi_h^3 \colon H^{-1/2}(\Gamma) \to (X_h^{\frac12})'
\]
 the orthogonal projectors.

The fully discrete system then reads: Find $u_{n+1}^h \in X_h$, $\varphi^h_n \in X_h^{-\frac12}$, $\psi_n^h \in X_h^{\frac12}$ for $n = 1,\dots,N-1$ such that
\begin{subequations} \label{eq:full_disc} 
  \begin{equation}
    \label{eq:pde_disc}
    \begin{split}      
\Inner{c^{-2}[D^2 u^h]_n}{ v}+\Inner{\kappa \nabla u^h_n}{\nabla v}
+&\inner{\varphi^h_n}{\gamma^- v}\\ =& \Inner{f(t_n)}{v}+\inner{\beta_1(t_n)}{\gamma^- v}
    \end{split}
  \end{equation}
  \begin{equation}
    \label{eq:bie_disc}
    \begin{split}     
\inner{ -\gamma^-[D u^h]_n}{z}&
 +\tfrac12 \inner{\psi^h_n}{z}-\tfrac12\inner{\varphi^h_n }{w}
 \\&+ \inner{B(\partial^\tstep_t) \begin{pmatrix}
  \varphi^h \\\psi^h
\end{pmatrix}(t_n)}{
\begin{pmatrix}
  z\\w
\end{pmatrix}}
=
-\inner{\partial_t \Pi_h^2\beta_0(t_n)}{z}
\end{split}
  \end{equation}
\end{subequations}
for all $v \in X_h$, $z \in X_h^{-\frac12}$, $w \in X_h^{\frac12}$. To complete the system we need to define the initial data. We set $u_0^h = R_h u_0$ and using the equation in the interior we set 
\[
  \begin{split}    
u_1^h &= u_0^h+\tstep R_h v_0+\frac12 \tstep^2 R_h \partial_t^2 u(0)\\
 &= u_0^h+\tstep R_h v_0+\frac12 \tstep^2 R_h (c^2\div (\kappa \nabla u_0)-f(0)).
  \end{split}
\]
Here $R_h\colon \Hin \to X_h$ is the elliptic projector satisfying 
\begin{equation}
  \label{eq:Rh}  
\Inner{\kappa \nabla R_h u}{\nabla v}+\Inner{R_hu}{v}  = \Inner{\kappa \nabla u}{\nabla v}+\Inner{u}{v} \qquad \forall v \in X^1_h.
\end{equation}
We also set $\varphi_0^h = 0$ and $\psi_0^h = 0$ since at time $t = 0$, the wave has not yet reached the boundary $\Gamma$. 

We first prove the stability of the above system under perturbations. Stability will be shown using the discrete energy in $\Omega^-$
\[
E_n(u^h) = \frac12 \left \|\frac{u^h_n-u^h_{n-1}}{c\tstep} \right\|^2_{\Omega}
+ \frac12 \Inner{\kappa \nabla u^h_n}{\nabla u^h_{n-1}}.
\]
Standard calculation shows that the inverse inequality \eqref{eq:inverse_ineq} implies
\[
E_n(u^h) \geq \frac12\left(1-\frac{1}{4}\Cinv^2 c_1^2\tstep^2 h^{-2}\right)\left\| \frac{u^h_n - u^h_{n-1}}{c\tstep} \right\|^2_{\Omega}+ \frac{1}{2}\left\|\kappa^{1/2} \frac{\nabla u^h_n + \nabla u^h_{n-1}}{2} \right\|^2_{\Omega}.
\]
Therefore under the CFL condition 
\begin{equation}
  \label{eq:CFL}
  \tstep < \frac{\sqrt{2} h}{\Cinv c_1 }
\end{equation}
the discrete energy is positive and
\[
E_n(u^h) \geq \frac14\left\| \frac{u^h_n - u^h_{n-1}}{c\tstep} \right\|_{\Omega}^2+ \frac{1}{2}\left\|\kappa^{1/2} \frac{\nabla u^h_n + \nabla u^h_{n-1}}{2} \right\|^2_{\Omega}.
\]

\begin{theorem}\label{thm:stab}
  Let  $g_n \in L^2(\Omega)$, $\rho_n \in H^{1/2}(\Gamma)$, $\sigma_n \in H^{-1/2}(\Gamma)$, $n = 1\dots$,  and $u_0^h, u_1^h \in X_h$ be given.   Under the CFL condition \eqref{eq:CFL}, $\tstep \leq \tstep_0$ for some fixed $\tstep_0> 0$  and with $\varphi_0^h = \psi_0^h = 0 $,  the system, $n = 1,\dots,N$,
\[
    \begin{split}      
\Inner{c^{-2}[D^2 u^h]_n}{ v}+\Inner{\kappa \nabla u^h_n}{\nabla v}
+\inner{\varphi^h_n}{\gamma^- v} &= \Inner{g_n}{v}\\
    \begin{pmatrix}
 -\gamma^-[D u^h]_n\\
0     
    \end{pmatrix}
+
\begin{pmatrix}
\frac12\psi^h_n\\  -\frac12\varphi^h_n 
\end{pmatrix}
 + B_h(\partial^\tstep_t) \begin{pmatrix}
  \varphi^h \\\psi^h
\end{pmatrix}(t_n)
 &=
 \begin{pmatrix}
\Pi_h^2\rho_n\\\Pi_h^3\sigma_n
 \end{pmatrix}
\end{split}
\]
 for all $v \in X_h$ has a unique solution. Further, the following stability bound holds
\[
\begin{split}    
  E_{N+1}+\tstep \sum_{n = 0}^{N}
&\left(\|(\partial_t^{\tstep})^{-1} \varphi^h(t_n)\|_{-1/2,\Gamma}^2+ \|(\partial_t^{\tstep})^{-1} \psi^h(t_n)\|_{1/2,\Gamma}^2\right)\\ &\leq C(T)(E_1+R_N),
  \end{split}
\]
where
\[
  \begin{split}    
    R_N =&  \frac{1}{2\beta}\tstep\sum_{n = 0}^{\infty} \varrho^{2n}\left(
\|\partial_t^{\tstep}\rho(t_n)\|_{1/2,\Gamma}^2+
\|\partial_t^{\tstep}\sigma(t_n)\|_{-1/2,\Gamma}^2
\right)
\\ &+ \tstep\frac12c_1^2\sum_{n = 0}^N \varrho^{2n}\|g_n\|_{\Omega}^2,
  \end{split}
\]
and $C(T) > 0$ is a constant depending on the final time $T = N\tstep$. 
\end{theorem}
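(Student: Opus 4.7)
The strategy is to derive two discrete energy-type identities---one from the interior PDE block, one from the BIE block---combine them so that the trace pairings on $\Gamma$ cancel, and then invoke the positivity of the convolution-quadrature Calder\'on operator (Theorem~\ref{thm:herglotz}) together with the Parseval-type bound of Lemma~\ref{lem:parseval}. Testing the PDE block with $v=[Du^h]_n$ and using the standard telescoping
\[
\Inner{c^{-2}[D^2u^h]_n}{[Du^h]_n}+\Inner{\kappa\nabla u^h_n}{\nabla[Du^h]_n}=\frac{E_{n+1}-E_n}{\tstep}
\]
produces $(E_{n+1}-E_n)/\tstep+\inner{\varphi^h_n}{\gamma^-[Du^h]_n}=\Inner{g_n}{[Du^h]_n}$. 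Testing the BIE block with $(z,w)=(\varphi^h_n,\psi^h_n)$ and taking real parts, the off-diagonal half-identity contributes $\tfrac12\inner{\psi^h_n}{\varphi^h_n}-\tfrac12\inner{\varphi^h_n}{\psi^h_n}$, which is purely imaginary and drops out. Adding the two real identities, the shared pairing $\Re\inner{\varphi^h_n}{\gamma^-[Du^h]_n}$ cancels.

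Next I multiply by $\tstep\,\varrho^{2n}$ with $\varrho=e^{-\sigma\tstep}$ for a fixed small $\sigma>0$ and sum $n=1,\ldots,N$. Summation by parts yields
\[
\sum_{n=1}^N\varrho^{2n}(E_{n+1}-E_n)=\varrho^{2N}E_{N+1}-\varrho^{2}E_1+(1-\varrho^2)\sum_{n=2}^N\varrho^{2(n-1)}E_n;
\]
under \eqref{eq:CFL} all $E_n\geq 0$, so the last sum is dropped. Extending $(\varphi^h,\psi^h)$ by zero for $n>N$ (by causality this does not alter any pairing up to $t_N$) and applying Theorem~\ref{thm:herglotz} with $\eta=-1$---the decay rate forced by Lemma~\ref{lem:calderon_pos} in the regime $|s|\geq 1$, the factor $\min(1,|s|^2)$ only improving the bound for small $|s|$---bounds the Calder\'on contribution from below by a multiple of $\tstep\sum_{n=0}^{N}\varrho^{2n}\!\left(\|(\partial_t^\tstep)^{-1}\varphi^h(t_n)\|^2_{-1/2,\Gamma}+\|(\partial_t^\tstep)^{-1}\psi^h(t_n)\|^2_{1/2,\Gamma}\right)$.

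The BIE source pairings $\inner{\rho_n}{\varphi^h_n}$ and $\inner{\sigma_n}{\psi^h_n}$ are then split by Lemma~\ref{lem:parseval} into a part absorbable into the Calder\'on lower bound (with a $1/(2\beta)$ scaling chosen to match) and a residual in $\|\partial_t^\tstep\rho\|^2_{1/2,\Gamma}$, $\|\partial_t^\tstep\sigma\|^2_{-1/2,\Gamma}$ that forms the first group of terms in $R_N$. The interior source $\Inner{g_n}{[Du^h]_n}$ is estimated by Cauchy--Schwarz and Young's inequality using $\|[Du^h]_n\|_\Omega\leq c_1(\sqrt{E_{n+1}}+\sqrt{E_n})$ (which follows from $c\leq c_1$ and the lower bound on $E_n$ from the CFL condition), producing the $c_1^2/2\,\|g_n\|^2_\Omega$ contribution to $R_N$ and $E_n$-remainders that a discrete Gronwall argument absorbs for $\tstep\leq\tstep_0$ sufficiently small. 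Dividing through by $\varrho^{2N}=e^{-2\sigma T}$---a constant once $T$ is fixed---converts $\varrho^{2N}E_{N+1}$ to $E_{N+1}$ and $\varrho$-weighted sums to lower bounds on unweighted sums, at the cost of the $T$-dependent factor $C(T)$. Unique solvability at each time step is a by-product: applied to the homogeneous problem (zero data, zero past) the identity forces the new unknowns to vanish, so the finite-dimensional one-step system is injective, hence invertible.

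The main technical subtlety is to align exponents: the $(\partial_t^\tstep)^{-1}$-norms produced by the Herglotz lower bound (Theorem~\ref{thm:herglotz} with $\eta=-1$) must be exactly the norms on which absorption from the BIE right-hand side is performed via Lemma~\ref{lem:parseval}, and the only permissible source of exponential growth in $T$ is the passage between $\varrho$-weighted and unweighted sums. A secondary delicacy is that Theorem~\ref{thm:herglotz} is stated for infinite sequences while the solution is defined only up to step $N$; the causal zero-extension used above is what reconciles the two without affecting pairings for $n\leq N$.
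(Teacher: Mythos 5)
Your proposal follows essentially the same route as the paper: the same test functions $v=[Du^h]_n$, $(z,w)=(\varphi^h_n,\psi^h_n)$, the discrete energy identity, exponential weights $\varrho^{2n}$ with causal zero-extension of the densities to invoke Theorem~\ref{thm:herglotz} (with $\eta=-1$) and Lemma~\ref{lem:parseval}, Young's inequality on $\Inner{g_n}{[Du^h]_n}$, a discrete Gronwall step, and uniqueness deduced from stability of the square system. The only presentational looseness is that the Gronwall step needs the weighted inequality for every intermediate truncation index $m\le N$ (as in the paper's auxiliary sequences $\varphi^{h,m},\psi^{h,m}$), not just the single sum up to $N$, but this is a routine repetition of your argument rather than a gap.
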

\begin{proof}
As the linear system to be solved is square, the stability bound implies uniqueness and hence existence. To obtain the stability bound we test the system with $v = [Du^h]_n$, $z = \varphi_n^h$, $w = \psi_n^h$. Summing the two equations and using identities
\[
  \begin{split}    
[D^2u]_n &= \frac1{\tstep}\left(\frac1{\tstep}(u_{n+1}-u_n)-\frac1{\tstep}(u_n-u_{n-1})\right)\\ [Du]_n &= \frac12\left(\frac1{\tstep}(u_{n+1}-u_n)+\frac1{\tstep}(u_n-u_{n-1})\right)
  \end{split}
\]
we have that
\[
\frac1{\tstep}(E_{n+1}-E_n) +\inner{\begin{pmatrix}
  \varphi^h_n \\\psi^h_n
\end{pmatrix}}{ B(\partial^\tstep_t) \begin{pmatrix}
  \varphi^h \\\psi^h
\end{pmatrix}(t_n)} = \Inner{g_n}{[Du^h]_n}+
\inner{\begin{pmatrix}
  \varphi^h_n \\\psi^h_n
\end{pmatrix}}{
\begin{pmatrix}
  \rho_n\\ \sigma_n
\end{pmatrix}
}.
\]

Next, we multiply the $n$th equation by $\varrho^{2n}$ with $\varrho = e^{-\tstep/T} < 1$ and sum over $n$. Using that $\varrho^{2n}E_n-\varrho^{2(n-1)}E_n \leq 0$ we have that
\[
  \begin{split}
    \varrho^{2m}E_{m+1} \leq& E_1-\tstep \sum_{n = 1}^{m}\varrho^{2n}
\inner{\begin{pmatrix}
  \varphi^h_n \\\psi^h_n
\end{pmatrix}}{ B(\partial^\tstep_t) \begin{pmatrix}
  \varphi^h \\\psi^h
\end{pmatrix}(t_n)}\\
&+\tstep\sum_{n = 1}^{m} \varrho^{2n}\left(\Inner{g_n}{[Du^h]_n}+
\inner{\begin{pmatrix}
  \varphi^h_n \\\psi^h_n
\end{pmatrix}}{
\begin{pmatrix}
  \rho_n\\ \sigma_n
\end{pmatrix}
}\right).
\end{split}
\]
To apply the results of Theorem~\ref{thm:herglotz} and Lemma~\ref{lem:parseval}, we use the auxiliary sequences
\[
\varphi_n^{h,m} := \left\{
  \begin{array}{cc}
    \varphi_n^h & n \leq m\\
    0& n > m
  \end{array}
\right.
\qquad
\psi_n^{h,m} := \left\{
  \begin{array}{cc}
    \psi_n^h & n \leq m\\
    0& n > m
  \end{array}
\right..
\]
From Theorem~\ref{thm:herglotz} we have that
\[
  \begin{split}
     \sum_{n = 0}^{m}\varrho^{2n}
\inner{\begin{pmatrix}
  \varphi^h_n \\\psi^h_n
\end{pmatrix}}{ B(\partial^\tstep_t) \begin{pmatrix}
  \varphi^h \\\psi^h
\end{pmatrix}(t_n)} =&     \sum_{n = 0}^{\infty}\varrho^{2n}
\inner{\begin{pmatrix}
  \varphi^{h,m}_n \\ \psi^{h,m}_n
\end{pmatrix}}{ B(\partial^\tstep_t) \begin{pmatrix}
  \varphi^{h,m} \\\psi^{h,m}
\end{pmatrix}(t_n)}\\
 \geq& \beta\tstep \sum_{n = 0}^\infty\varrho^{2n}
\left(\|(\partial_t^{\tstep})^{-1} \varphi^{h,m}(t_n)\|_{-1/2,\Gamma}^2\right. \\&+\left. \|(\partial_t^{\tstep})^{-1} \psi^{h,m}(t_n)\|_{1/2,\Gamma}^2\right).
  \end{split}
\]
Whereas from Lemma~\ref{lem:parseval} we have
\[
  \begin{split}    
\left|\sum_{n = 0}^{m} \varrho^{2n}
\inner{\begin{pmatrix}
  \varphi^h_n \\\psi^h_n
\end{pmatrix}}{
\begin{pmatrix}
  \rho_n\\ \sigma_n
\end{pmatrix}
}\right|&
=\left|\sum_{n = 0}^{\infty} \varrho^{2n}
\inner{\begin{pmatrix}
  \varphi^{h,m}_n \\\psi^{h,m}_n
\end{pmatrix}}{
\begin{pmatrix}
  \rho_n\\ \sigma_n
\end{pmatrix}
}\right|\\
&\hspace{-2cm}\leq \frac{\beta \tstep}{2}\sum_{n = 0}^{\infty} \varrho^{2n}\left(
\|(\partial_t^{\tstep})^{-1}\varphi^{h,m}(t_n)\|_{-1/2,\Gamma}^2+
\|(\partial_t^{\tstep})^{-1}\psi^{h,m}(t_n)\|_{1/2,\Gamma}^2
\right)\\
&+ \frac{1}{2\beta}\tstep\sum_{n = 0}^{\infty} \varrho^{2n}\left(
\|\partial_t^{\tstep}\rho(t_n)\|_{1/2,\Gamma}^2+
\|\partial_t^{\tstep}\sigma(t_n)\|_{-1/2,\Gamma}^2
\right).
  \end{split}
\]

Further
\[
  \begin{split}
    \tstep\sum_{n = 1}^m \varrho^{2n}\Inner{g_n}{[Du^h]_n} &\leq
    \tstep\sum_{n = 1}^m \varrho^{2n}\left(\frac12c_1^2\|g_n\|^2+ \frac12  (E_{n+1}+E_n)\right)\\
&\leq
    \tstep\frac12c_1^2\sum_{n = 1}^m \varrho^{2n}\|g_n\|^2+ \tstep\sum_{n = 1}^m \varrho^{2n}E_n
+\frac12\tstep \varrho^{2m}E_{m+1}.
  \end{split}
\]

Combining everything we have that 
\[
  \begin{split}    
  \frac{1-\tstep}{\varrho^2}\varrho^{2(m+1)} E_{m+1}+&\frac{\beta}{2}\tstep \sum_{n = 0}^{m}\varrho^{2n}
\left(\|(\partial_t^{\tstep})^{-1} \varphi^h(t_n)\|_{-1/2,\Gamma}^2+ \|(\partial_t^{\tstep})^{-1} \psi^h(t_n)\|_{1/2,\Gamma}^2\right)\\ &\leq E_1+R_N+\tstep \sum_{n = 1}^m \varrho^{2n}E_n
  \end{split}
\]
where
\[
    R_N =  \frac{1}{2\beta}\tstep\sum_{n = 0}^{\infty} \varrho^{2n}\left(
\|\partial_t^{\tstep}\rho(t_n)\|_{1/2,\Gamma}^2+
\|\partial_t^{\tstep}\sigma(t_n)\|_{-1/2,\Gamma}^2
\right)
+ \tstep\frac12c_1^2\sum_{n = 1}^N \varrho^{2n}\|g_n\|^2.
\]
Recalling $\varrho^{2N} = e^{-2}$ and $\tstep \leq \tstep_0 < 1$, Gronwall lemma finishes the proof.
\end{proof}

Convergence now follows from stability and consistency. To state the theorem we let 
\begin{equation}
  \label{eq:errs} 
e^{h,u}_n :=  u^h_n-R_hu(t_n), \quad e^{h,\varphi} :=  \varphi^h_n-P^2_h\varphi(t_n),
\quad e^{h,\psi} :=  \psi^h_n-P^3_h\psi(t_n), 
\end{equation}
where $R_h$ is the elliptic projection \eqref{eq:Rh}, and 
\[
  \begin{split}    
P_h^2\colon  H^{-1/2}(\Gamma) \to X_h^{-\frac12} \quad
P_h^3\colon  H^{1/2}(\Gamma) \to X_h^{\frac12} 
  \end{split}
\]
are $L^2(\Gamma)$ projections.

\begin{theorem}\label{thm:conv}
Let $u$,$\varphi$ and $\psi$ be the solution of the continuous problem \eqref{eq:prob} and  $u_n^h$, $\varphi_n^h$, $\psi_n^h$, $n = 0,\dots,N$, the solution of the fully discrete system \eqref{eq:full_disc}. If $u \in C^4([0,T]; L^2(\Omega)) \cap C^3([0,T]; H^1(\Omega))$,  $\psi \in W_0^{m}([0,T]; H^{1/2}(\Gamma))$ and $\varphi \in W_0^m([0,T]; H^{-1/2}(\Gamma))$, then under the CFL condition \eqref{eq:CFL} and with \eqref{eq:errs} 
\[
\left\|\frac{e^{h,u}_n-e^{h,u}_{n-1}}{\tstep}\right\|_\Omega+\left\|\frac{ \nabla e^{h,u}_n+ \nabla e^{h,u}_{n-1}}{2}\right\|_\Omega  
\leq C(T) \mathcal{E}_n + \O(\tstep^2)
\]
and
\[
\tstep \sum_{n = 0}^{N}
\left(\|(\partial_t^{\tstep})^{-1} e^{h,\varphi}(t_n)\|_{-1/2,\Gamma}^2+ \|(\partial_t^{\tstep})^{-1} e^{h,\psi}(t_n)\|_{1/2,\Gamma}^2\right)\leq C(T) \mathcal{E}_n + \O(\tstep^2)
\]
where
\[
\begin{split}
\mathcal{E}_n &:= 
 \left(\tstep \sum_{n = 0}^{N} \|(I-R_h)\partial_t^2u(t_n)\|^2_\Omega+ \|(I-R_h)u(t_n)\|^2_\Omega\right)^{1/2}\\
&+ \max_{t \in [0,t_n]}\|(P_h^2-I)\partial_t^{m_0}\partial_t^{\tstep}\varphi(t)\|_{-1/2}+ \|(P_h^3-I)\partial_t^{m_0}\partial_t^{\tstep}\psi(t)\|_{1/2},
  \end{split}
\]
with $m_0 > 5$ and $m > 9$ for the trapezoidal rule and $m_0> 3$ and $m > 7$ for the other CQ methods satisfying Assumption~\ref{ass:delta}.
\end{theorem}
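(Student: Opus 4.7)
The plan is to set up error equations for the projected errors and apply the stability estimate of Theorem~\ref{thm:stab}. Plugging the projected exact solutions $R_h u(t_n)$, $P_h^2 \varphi(t_n)$, $P_h^3 \psi(t_n)$ into the left-hand side of \eqref{eq:full_disc} and subtracting from the discrete equations, the errors \eqref{eq:errs} satisfy the system appearing in Theorem~\ref{thm:stab} with right-hand sides equal to the consistency residuals. Using the defining identity \eqref{eq:Rh} to rewrite $\Inner{\kappa\nabla(I - R_h)u(t_n)}{\nabla v} = \Inner{(R_h-I)u(t_n)}{v}$ for $v \in X_h$, the interior residual becomes a sum of a time-consistency term $\Inner{c^{-2}([D^2u]_n - \partial_t^2 u(t_n))}{v}$, elliptic projection contributions $\Inner{c^{-2}(I-R_h)[D^2u]_n}{v} + \Inner{(R_h-I)u(t_n)}{v}$, and a boundary projection term $\inner{(P_h^2-I)\varphi(t_n)}{\gamma^- v}$. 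In the BIE block the residual consists of the time-consistency defect $\gamma^-([DR_hu]_n - \partial_tu(t_n))$, the split Calder\'on error
\[
\left(B(\partial_t^\tstep) - B(\partial_t)\right)\begin{pmatrix}\varphi\\\psi\end{pmatrix}(t_n) + B(\partial_t^\tstep)\begin{pmatrix}(P_h^2-I)\varphi\\(P_h^3-I)\psi\end{pmatrix}(t_n),
\]
and the algebraic pieces $\tfrac12(P_h^3-I)\psi(t_n)$ and $-\tfrac12(P_h^2-I)\varphi(t_n)$ coming from the off-diagonal block.

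Next each residual is bounded individually. Under the stated regularity on $u$, Taylor expansion gives $[D^2u]_n - \partial_t^2u(t_n) = \O(\tstep^2)$ and $[Du]_n - \partial_tu(t_n) = \O(\tstep^2)$. The first Calder\'on piece is $\O(\tstep^2)$ by the Lubich convergence theorem for CQ applied to $B$ (which satisfies $\|B(s)\| \leq C|s|^2$, so $\mu = 2$), provided $\varphi,\psi \in W_0^m$; since Theorem~\ref{thm:stab} measures the data $\rho,\sigma$ in the $\partial_t^\tstep$-norm, one extra time derivative is needed, which, together with the trapezoidal rule's stricter requirement $m > 2\mu+3$, explains the threshold $m > 7$ versus $m > 9$. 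The second Calder\'on piece is transformed into the Laplace domain; combining the bound $\|B(s)\|\leq C|s|^2$ with Parseval's formula and the $(\partial_t^\tstep)^{-1}$ factors that appear on the stability side then controls the boundary projection-through-CQ contribution by $\max_t \|(P_h^{2,3}-I)\partial_t^{m_0}\partial_t^\tstep(\varphi,\psi)\|$ with $m_0 > 3$ (respectively $m_0 > 5$ for TR), producing the boundary terms in $\mathcal{E}_n$. Inserting the squared and $\varrho^{2n}\tstep$-weighted sums of these residuals into $R_N$ of Theorem~\ref{thm:stab} delivers the main stability estimate, from which the two claimed bounds follow using the CFL-based lower bound on the discrete energy.

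It remains to control the initial energy $E_1$ of the error. The choices $u_0^h = R_h u_0$ and the Taylor-consistent definition of $u_1^h$ give $e^{h,u}_0 = 0$ and $\|(e^{h,u}_1 - e^{h,u}_0)/\tstep\|_\Omega = \O(\tstep^2)$ up to elliptic projection residuals of $v_0$ and $\partial_t^2 u(0)$, both of which are already dominated by $\mathcal{E}_n$. The main technical obstacle I anticipate is the boundary term $\inner{(P_h^2-I)\varphi(t_n)}{\gamma^- v}$ in the interior residual, since $\gamma^-(X_h) \not\subset X_h^{-\frac12}$ and the $L^2$ orthogonality of $P_h^2$ does not make this term vanish; the plan is to use summation by parts in time to transfer this contribution onto the BIE side, so that it becomes part of the data $\rho_n,\sigma_n$ of Theorem~\ref{thm:stab} and is absorbed by the $\|\partial_t^\tstep\rho\|$-type bound at the cost of an additional discrete time derivative on $\varphi$. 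Tracking precisely the powers $m_0$ and $m$ through this manipulation, together with the CQ error passed through the projection, is the source of the explicit smoothness thresholds in the statement, and is the most delicate part of the argument.
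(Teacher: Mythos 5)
Your proposal follows essentially the same route as the paper: write the error equations for $u^h_n-R_hu(t_n)$, $\varphi^h_n-P_h^2\varphi(t_n)$, $\psi^h_n-P_h^3\psi(t_n)$, identify the residuals, feed them as the data $g_n,\rho_n,\sigma_n$ into Theorem~\ref{thm:stab}, bound the leapfrog consistency terms by Taylor expansion, use the elliptic-projection identity \eqref{eq:Rh} to trade $\Inner{\kappa\nabla(I-R_h)u}{\nabla v}$ for an $L^2$ term, split the Calder\'on residual into a spatial-projection part and a CQ-in-time part, invoke the CQ error/continuity estimates for $B$ (with $\mu=2$, plus one extra $\partial_t^{\tstep}$ coming from the norm in which Theorem~\ref{thm:stab} measures $\rho,\sigma$) to obtain exactly the thresholds $m_0>3$, $m>7$ (resp.\ $m_0>5$, $m>9$ for TR), and finally control $E_1$ through the Taylor-consistent start $u_0^h,u_1^h$. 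Two differences in detail: (i) you split the Calder\'on residual as $(B(\partial_t^{\tstep})-B(\partial_t))(\varphi,\psi)+B(\partial_t^{\tstep})((P_h^2-I)\varphi,(P_h^3-I)\psi)$, whereas the paper uses $B(\partial_t)((P_h^2-I)\varphi,(P_h^3-I)\psi)+(B(\partial_t^{\tstep})-B(\partial_t))(P_h^2\varphi,P_h^3\psi)$; both splittings work, the paper's has the convenience that the projection error is hit by the continuous operator (so the continuity lemma of \cite{lbbook} applies directly) while the CQ error acts on the projected, hence admissible, densities. (ii) You retain the off-diagonal pieces $\tfrac12(P_h^3-I)\psi(t_n)$, $-\tfrac12(P_h^2-I)\varphi(t_n)$ and the interior coupling term $\inner{(P_h^2-I)\varphi(t_n)}{\gamma^- v}$; the paper's listed residuals contain none of these (the off-diagonal pieces are harmless and absorbed by the same projection-error quantities in $\mathcal{E}_n$).

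One caution on the term you single out as the main obstacle: your plan to move $\inner{(P_h^2-I)\varphi(t_n)}{\gamma^- v}$ ``onto the BIE side so that it becomes part of the data $\rho_n,\sigma_n$'' does not fit the structure of Theorem~\ref{thm:stab} as stated, because that term is paired with $\gamma^- v$, $v\in X_h$, in the interior equation, while $\rho_n,\sigma_n$ are paired with $z\in X_h^{-\frac12}$, $w\in X_h^{\frac12}$, and the interior perturbation is restricted to the form $\Inner{g_n}{v}$ with $g_n\in L^2(\Omega)$. If you keep this term you must handle it inside the stability argument itself (e.g., after testing with $v=[De^{h,u}]_n$, sum by parts in the time index so the discrete derivative falls on $(P_h^2-I)\varphi$ and bound $\|\gamma^- e^{h,u}_n\|_{1/2,\Gamma}$ by the energy via the trace inequality), which amounts to a mild extension of Theorem~\ref{thm:stab} rather than a black-box application; the paper's own proof simply does not carry this term in its residuals, so your attempt is, on this point, more scrupulous than the printed argument but the transfer mechanism as you describe it would not go through literally.
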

\begin{proof}
 The errors $e^{h,u}$, $e^{h,\varphi}$ and $e^{h,\psi}$ satisfy the discrete system from Theorem~\ref{thm:stab} with perturbations given by
\[
  \begin{split}    
g_n =& c^{-2}[D^2 R_hu]_n-c^{-2}\partial_t^2u(t_n)+R_hu(t_n)-u(t_n)\\
\begin{pmatrix}
  \rho_n\\ \sigma_n
\end{pmatrix}
=&
\begin{pmatrix}
[D \gamma^-u]_n-\partial_t \gamma^-u(t_n)\\
0  
\end{pmatrix}
+B(\partial_t) \begin{pmatrix}
  P_h^2\varphi-\varphi \\P_h^3\psi-\psi
\end{pmatrix}(t_n)\\
&+(B(\partial_t^\tstep)-B(\partial_t)) \begin{pmatrix}
  P_h^2\varphi \\P_h^3\psi
\end{pmatrix}(t_n).
  \end{split}
\]
For  $u \in C^4([0,T]; L^2(\Omega))$ 
\[
\|g_n\| \leq C\left(\|(I-R_h)\partial_t^2u(t_n)\|_\Omega+\|(I-R_h)u(t_n)\|_\Omega\right)+\O(\tstep^2).
\]
Further, for $u \in C^3([0,T]; H^1(\Omega))$ 
\[
  \begin{split}    
\|[D \gamma^-u]_n-\partial_t \gamma^-u(t_n)\|_{H^{1/2}(\Gamma)} &\leq C
\|[D u]_n-\partial_t u(t_n)\|_{H^1(\Omega)} = \O(\tstep^2).
  \end{split}
\]

Assuming $\psi \in W^{m_0}_0([0,T]; H^{1/2}(\Gamma))$ and $\varphi \in W_0^{m_0}([0,T]; H^{-1/2}(\Gamma))$ we have from \eqref{eq:Bcont} and \cite[Lemma~2.5]{lbbook} (see also \cite[Lemma~2.2]{Lub94} and \cite{FJStrap}), 
\[
  \begin{split}    
\left\|\partial_t^{\tstep}B(\partial_t) \begin{pmatrix}
  P_h^2\varphi-\varphi \\P_h^3\psi-\psi
\end{pmatrix}(t_n)\right\|
\leq& C \int_0^{t_n} \|(P_h^2-I)\partial_t^{m_0}\partial_t^{\tstep}\varphi(t)\|_{-1/2}dt\\&+ C \int_0^{t_n}\|(P_h^3-I)\partial_t^4\partial_t^{\tstep}\psi(t)\|_{1/2}dt\\
\leq& C t_n \max_{t \in [0,t_n]}\|(P_h^2-I)\partial_t^{m_0}\partial_t^{\tstep}\varphi(t)\|_{-1/2}\\&+Ct_n\max_{t \in [0,t_n]} \|(P_h^3-I)\partial_t^{m_0}\partial_t^{\tstep}\psi(t)\|_{1/2},
  \end{split}
\]
where $m_0 > 5$ for the trapezoidal rule and $m_0> 3$ for other CQ methods.
Further from \cite[Theorem~2.2 and 2.3]{lbbook} (see also \cite[Theorem~3.1]{Lub94} and \cite[Theorem 2.1]{FJStrap}), for $\psi \in W^{m}_0([0,T]; H^{1/2}(\Gamma))$ and $\varphi \in W_0^{m}([0,T]; H^{-1/2}(\Gamma))$
\[
  \begin{split}    
\left\|\partial_t^{\tstep}(B(\partial_t^\tstep)-B(\partial_t)) \begin{pmatrix}
  P_h^2\varphi \\P_h^3\psi\end{pmatrix}(t_n)\right\| \leq& \left\|\partial_t^{\tstep}B(\partial_t^\tstep)-\partial_tB(\partial_t) \begin{pmatrix}
  P_h^2\varphi \\P_h^3\psi\end{pmatrix}(t_n)\right\|\\
&+\left\|(\partial_t-\partial_t^{\tstep})B(\partial_t) \begin{pmatrix}
  P_h^2\varphi \\P_h^3\psi\end{pmatrix}(t_n)\right\|
\\=& \O(\tstep^2),
  \end{split}
\]
where $m > 9$ for the trapezoidal rule and $m > 7$ for the other CQ schemes.

Combining all the estimates with stability from Theorem~\ref{thm:stab} 
it just remains to bound the initial error:
\[
E_0(e^{h,u}) = 
 \frac12 \left \|\frac{e^{h,u}_1-e^{h,u}_0}{\tstep}\right\|^2_{\Omega}
+ \frac12 \Inner{\kappa \nabla e^{h,u}_1}{\nabla e^{h,u}_0}.
\]
Using the Taylor expansion in time of the exact solution and the choice of $u_0^h$ and $u_1^h$ shows that $E_0 = \O(\tstep^2)$ and thus completes the proof.
\end{proof}


\section{Implementation and choice of linear multistep method}

Let $\{v_1,\dots,v_{M_1}\}$,   $\{z_1,\dots,z_{M_2}\}$, and $\{w_1,\dots,w_{M_3}\}$ be the bases of $X_h$, $X_h^{-\frac12}$ and  $X_h^{\frac12}$ respectively. Denote the mass and trace matrices by
\[
(\Mb)_{ij} = \Inner{\frac1{c^2}v_i}{v_j}, \quad (\Cb)_{ij} = \inner{z_i}{\gamma^- v_j}, \quad
(\Ib)_{ij} = \inner{z_i}{w_j}.
\]
At each time step the following system needs to be solved
\[
  \begin{split}
    \frac1{\tstep^2} \Mb \ub_{n+1} + \Cb^T \bm{\varphi}_n &= \text{known terms}\\ 
    \begin{pmatrix}
      -\frac1{2\tstep} \Cb \ub_{n+1} +\frac12 \Ib \bm{\psi}_n\\
-\frac12 \Ib^T \bm{\varphi}_n
    \end{pmatrix}
+ \mathbf{B}_h \left(\frac{\delta(0)}{\tstep}\right)
\begin{pmatrix}
  \bm{\varphi}_n\\
  \bm{\psi}_n\\
\end{pmatrix}
&= \text{known terms}
  \end{split}
\]
We can either solve a large system for both domain and boundary unknowns in each timestep, alternatively eliminating $\ub_{n+1}$ we obtain the system for the densities at time $t_n$
\[
\left[  \begin{pmatrix}
    \frac{\tstep}{2} \Cb \Mb^{-1}\Cb^T & \frac12 \Ib\\
-\frac12 \Ib^T & 0 
  \end{pmatrix}
+ \mathbf{B}_h \left(\frac{\delta(0)}{\tstep}\right)\right]
\begin{pmatrix}
  \bm{\varphi}_n\\
  \bm{\psi}_n\\
\end{pmatrix}
= \text{known terms}.
\]
Solving the linear system iteratively, the matrix $\Cb \Mb^{-1}\Cb^T$ need not be constructed explicitly. If using mass lumping for the  conforming finite element method or using a symmetric discontinuous Galerkin method  to discretize the interior equations, the mass matrix $\Mb$ becomes block diagonal and the product of the matrices could be constructed efficiently.  

The main cost is however in computing the history contained in the known terms
\[
\sum_{j = 0}^{n-1} \bm{\omega}_{n-j} \begin{pmatrix}
  \bm{\varphi}_j\\
  \bm{\psi}_j\\
\end{pmatrix},
\]
where the weights can be approximated by (see \eqref{eq:omega_quad})
\begin{equation}
  \label{eq:Omega_quad}  
 \bm{\omega}_j(K) 
 \approx \frac{\kr^{-j}}{N+1} \sum_{\ell = 0}^N 
\mathbf{B}_h\left(\frac{\delta(\kr\zeta_{N+1}^{-\ell})}{\Delta  t}\right)
\zeta_{N+1}^{\ell j}.
\end{equation}
The simplest implementation involves computing all the weights using FFT (cost $\O(N \log N)$) and then at each step evaluating the whole history. This results in a cost of $\O(N^2)$ and heavy use of computer memory. A more efficient approach is given in  \cite{HaLuSc:1985}, which reduces the cost to $\O(N \log^2 N)$. With additional modifications described in \cite{Banjai:2010}, the weights need never be computed and data sparse methods such as $\mathcal{H}$-matrices and the fast multipole method can be used to accelerate the discretization in space; see \cite{BaKa:2014a}. 

An aspect of the implementation of CQ for boundary integral equations that is rarely mentioned, is the spatial quadrature required to compute the integral operators. The reason for this is that unlike \bl{in space-time Galerkin methods, where quadrature needs to carefully deal with the sharp space-time cone \cite{Shanz_quad:2021}}, the spatial quadrature in CQ schemes is usually straightforward. \bl{The reason for this is that the CQ smooths out the space-time cone and the quadrature required is the same as needed for steady state problems, where quadrature techniques are well-developed; see  \cite{SaSc:2011}}. This is however not the case if the frequencies
\[
s_\ell = \frac{\delta(\kr\zeta_{N+1}^{-\ell})}{\Delta  t}
\]
become large compared to the spatial meshwidth. Precisely this  can occur in the case of the trapezoidal CQ and is exacerbated by the CFL requirement \eqref{eq:CFL}. We explain in more detail next.

The error in \eqref{eq:Omega_quad} is of size $\O(\kr^{N+1})$, however due to finite-precision considerations the parameter $\kr$ cannot be chosen too small. \bl{ Indeed, if $\eps$ denotes the finite precision, due to the multiplication by $\lambda^{-j}$, the total error is of the form
\[
\kr^{N+1}+\kr^{-N}\eps.
\]
Optimizing the choice of $\kr$, see \cite{Lubich:1988b}, gives $\kr = \eps^{\frac1{2N+1}}$. } \bl{Hence, $\kr < 1$, and since $N^{-1} = \O(\tstep)$} we have  $|\kr-1| = \O(\tstep)$.  For $\delta(\zeta)$ analytic in a neighbourhood of $|\zeta| \leq 1$, \bl{as is the case for BDF2}, we have that $s_\ell = \O(\tstep^{-1})$. On the contrary, for the trapezoidal rule we have \bl{due to the existence of a pole at $\zeta = 1$}, that
\[
|\dtr(\kr\zeta)| \leq \frac{4}{1-\kr} = \O(\tstep^{-1})
\]
and hence $s_\ell = \O(\tstep^{-2})$; see \cite{lbbook,FJStrap}.  Recalling that the kernel of  the integral operators in the frequency domain is given by $\frac{e^{-s_\ell r}}{4\pi r}$ in 3D, we see that spatial quadrature is needed for kernels that are either highly oscillatory and/or strongly decaying. 
The support of the boundary element functions is contained on panels of size $h \propto \tstep$, hence either a special spatial quadrature is needed or increasing number of nodes as the time-step is decreased. 

The above discussion strongly suggests the use of BDF2 instead of the trapezoidal scheme. However, the trapezoidal scheme is conservative, whereas BDF2 is strongly dissipative. This can be seen by examining the expansion of $\delta(e^{\mi \omega \tstep})$:
\[
\begin{split}  
\dbdf(e^{-\mi \omega \tstep}) &= \mi \omega \tstep+\mi \frac13\omega^3\tstep^3+\frac14 \omega^4\tstep^4 + \O((\omega\tstep)^5)\\
\dtr(e^{-\mi \omega}) &= \mi \omega\tstep +\mi \frac1{12}\omega^3\tstep^3+\O((\omega \tstep)^5).
\end{split}
\]
Namely, the action of CQ is to replace the exact wave number $\mi \omega$ by the approximation $\frac{\delta(e^{-\mi \omega\tstep})}{\tstep}$; \bl{see \eqref{eq:sdt}}. \bl{Thus, this action replaces  the exact kernel (in 3D) 
$
\frac{e^{-\mi \omega r}}{4\pi r} \text{ by }  \frac{e^{- \frac{\delta(e^{-\mi \omega \tstep})}{\tstep}r}}{4\pi r},
$
}
where it can be seen that BDF2 with $\Re \dbdf(e^{-\mi \omega\tstep}) = \frac14 \omega^4\tstep^4 + \O((\omega\tstep)^6) >0$ introduces damping and is dissipative, whereas the fact that the trapezoidal rule is conservative can be seen from  the identity $\Re \dtr(e^{-\mi \omega\tstep}) \equiv 0$. We can also see in the above expansions that both schemes are of second order, but that the error constant, \bl{i.e., the constant in front of $(\omega \tstep)^3$},  for the BDF2 scheme is $\frac13$ and for the trapezoidal rule is the optimal $\frac1{12}$. 

We show next how to construct a scheme that retains to a high degree the positive properties of the above two schemes. Namely, we search for a truncated trapezoidal rule of the form
\[
\dttr(\zeta) = (1-\zeta)+\frac12 (1-\zeta)^2+\sum_{j = 2}^{J} 2^{-j}c_j (1-\zeta)^{j+1}
\]
for some constants $c_j$ to be determined. Note that setting $c_j = 0$, we obtain again the BDF2 scheme, whereas setting $c_j = 1$ gives the truncated expansion of the trapezoidal generating function. For any choice of $c_j$, the scheme is second order \bl{and as $\dttr(\zeta)$ is entire,  the frequencies $s_\ell = \O(\tstep^{-1})$ grow only linearly as $\tstep \to 0$. } However, A-stability is not guaranteed. 

\bl{We make use of an interior point algorithm for constrained optimization as implemented in Matlab's \texttt{fmincon} to minimize the error constant in the resulting method under the condition of A-stability and with $c_j$ restricted to $[0,1]$. A-stability is checked numerically by sampling $\Re \dttr(e^{-\mi x})$ for $5\times 10^{4}$ equally spaced points in the interval $x \in [0,\pi]$.} This results for $J = 4$ in coefficients
\[
c_2 = 0.893817850529318,\;   c_3 = 0.684154908023834, \; c_4 =    0.629642997466429
\]
\bl{with the above described numerical test of A-stability indicating that $\Re \dttr(e^{-\mi x}) \geq -5 \times 10^{-17}$.}
With this choice of coefficients, the expansion of $\dttr(e^{-\mi \omega \tstep})$ is
\[
\dttr(e^{-\mi \omega \tstep}) = \mi \omega\tstep +\mi \frac1{9.10\cdots}\omega^3\tstep^3+(3.37\cdots)\times 10^{-4}\omega^4\tstep^4 + \O((\omega\tstep)^5).
\]
As expected the error constant \bl{in front of $(\omega \tstep)^3$} is between those of BDF2 ($\frac13$) and the trapezoidal rule ($\frac1{12}$) and the scheme is dissipative but much less so  than BDF2 as 
\[
\Re \dttr(e^{-\mi \omega \tstep}) = (3.37\cdots)\times 10^{-4}\omega^4\tstep^4+\O((\omega\tstep)^6)
\]
\bl{
compared to
\[
\Re \dbdf(e^{-\mi \omega\tstep}) = \frac14 \omega^4\tstep^4 + \O((\omega\tstep)^6).
\]
}
The stability regions of the three methods are shown in Figure~\ref{fig:stab_reg}.

\bl{Other choices of $J$ could be used. Increasing $J$ would lead to a better error constant, but as indicated in the plot above, would likely lead to a growing boundary of the stability region and larger frequencies $s_\ell$. Thus, a choice of $J$ is a compromise and the optimal choice will also depend on the particular wave problem that it is applied to.}

\begin{figure}
  \centering
  \includegraphics[width=.87\textwidth]{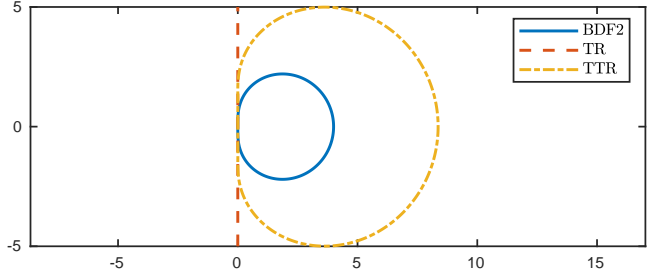}
  \caption{The boundaries of the stability regions of the BDF2, trapezoidal and numerically obtained truncated trapezoidal schemes. Stability region of the trapezoidal rule is the left-half complex plane, whereas for the BDF2 and truncated trapezoidal it is the region outside of the bounded domains shown.}
  \label{fig:stab_reg}
\end{figure}

\section{Numerical experiments}
To test the theoretical results, we first let $\Omega$ be the disk centred at the origin with radius $3$. We set $f \equiv 0$, $c \equiv 1$, $\kappa \equiv
\begin{pmatrix}
  1 & 0\\0& 1
\end{pmatrix}
$, $\uinc \equiv 0$ and initial data
\[
u_0(x) = \exp(-2|x|^2), \qquad v_0 \equiv 0.
\]
Note that the support of $u_0$ is not contained in $\Omega$, but is approximately zero on $\partial\Omega$, so that this discrepancy has no significant effect on the numerical experiments. Using the Hankel transform, we find that the exact solution is given by
\[
u(x,t) = \frac14\int_0^\infty \exp(-\frac{k^2}{8})J_0(k|x|) k \cos(kt)dk, \qquad x \in \Omega^-,
\]
where $J_0(\cdot)$ is a Bessel function of the first kind. 

The smooth domain $\Omega$ is approximated by a polygonal boundary. For the spatial discretization, we set $X_h$ to be the space of piecewise-linear nodal finite element functions, $X_h^{-\frac12}$ the space of piecewise constant and $X_h^{\frac12}$ the space of piecewise linear boundary element functions. 
To ensure that the CFL condition \eqref{eq:CFL} is satisfied we estimate the largest eigenvalue $\lambda_{\text{max}} = (\Cinv/h)^2$ of the generalized eigenvalue problem
\[
\Inner{\kappa \nabla u}{\nabla v} = \lambda \Inner{u}{v}, \qquad \forall v \in X_h
\]
and set the time-step to $\tstep = 2/\sqrt{\lambda_{\text{max}}}$.

We denote by $I^h \colon C(\Omega) \to X_h$ be the nodal interpolant onto $X_h$. As the error measure we use the energy error
\[
  \begin{split}    
\text{error} =& \max_n \left\|\frac{u^h_n-u^h_{n-1}}{\tstep}-\frac{I^hu(t_n)-I^hu(t_{n-1})}{\tstep}\right\|_\Omega\\&+\left\|\nabla \left(\frac{u^h_n+u^h_{n-1}}{2}-\frac{I^hu(t_n)+I^hu(t_{n-1})}{2}\right)\right\|_\Omega.
  \end{split}
\]

The CQ time-discretization of the boundary integral operators is based on  BDF2, the trapezoidal rule or the truncated trapezoidal rule. Each of these methods, as well as leapfrog in the interior,  is of second order and with the choice of finite and boundary element spaces  we expect the error in Theorem~\ref{thm:conv} to be 
\[
\text{error} = \O(\tstep^2+h^{3/2}) = \O(h^{3/2}) = \O(\tstep^{3/2}).
\]
Here we used that $\tstep \propto h$ and the approximation properties of the piecewise linear  finite element space
\[
\inf_{v^h \in X_h}\|v-v^h\|_\Omega \leq C h^2 \|v\|_{H^2(\Omega)}
\]
for any $v \in H^2(\Omega)$, and of the piecewise constant and piecewise linear  boundary element spaces:
\[
\inf_{\varphi^h \in X_h^{-\frac12}}\|\varphi-\varphi^h\|_{-1/2,\Gamma} \leq C h^{3/2} \|\varphi\|_{1,\Gamma}
\quad
\inf_{\psi^h \in X_h^{\frac12}}\|\psi-\psi^h\|_{1/2,\Gamma} \leq C h^{3/2} \|\psi\|_{2,\Gamma};
\]
for any $\varphi \in H^1(\Gamma)$ and $\psi \in H^2(\Gamma)$; see \cite{BrenSc} and \cite{SaSc:2011} respectively.
The polygonal approximation of the boundary does not destroy this convergence  rate  indeed it adds an $\O(h^2)$ additional error; see \cite{BrenSc,SaSc:2011}. 

\begin{figure}
  \centering
  \includegraphics[width=0.8\textwidth]{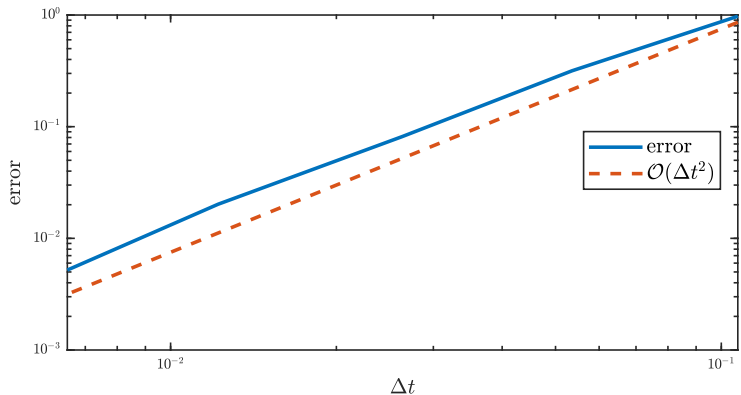}
  \caption{Convergence of the error agains the time-step for the unit disk.}
  \label{fig:conv_disk}
\end{figure}

The convergence of the error is plotted in Figure~\ref{fig:conv_disk}. Only a single graph is shown since all the CQ schemes give the same error. However, we have used significantly more quadrature points per element when computing the boundary element matrices for the trapezoidal scheme than for the other two schemes. The fact that the error is the same for the three CQ schemes in this example, is not surprising as quite small time-steps are used due to the CFL condition and as the solution is quite simple with no reflections of the wave; see the discussion in \cite{Banjai:2014}. A clear second order convergence can be seen. This is not a contradiction to the theory and is likely to be a super-convergence effect due to compairing the discrete solution to the interpolation of the exact solution.

We end with a numerical example concerning a non-convex domain, namely the L-shape domain with vertices $\{(-1,-1), (1,-1), (1,3), (-3,3), (-3,1), (-1, 1)\}$. We take a piecewise constant wave speed in $\Omega$:
\[
c(x) = \left\{
  \begin{array}{cc}
    2 &  \qquad x_1 \in (-2.5,1) \text{ and } x_2 \in (1.5, 2.5)\\
    1 & \text{otherwise}. 
  \end{array}
\right.
\]
We again set $f \equiv 0$, $c \equiv 1$, $\kappa \equiv
\begin{pmatrix}
  1 & 0\\0& 1
\end{pmatrix}
$ and this time zero initial condition in $\Omega$, i.e., $u_0 = v_0 = 0$. 
The scattering problem is excited by 9 sources outside of the domain that focus at a point inside $\Omega$. Images of the solution in $\Omega$ are shown in Figure~\ref{fig:focus_plot}. \bl{No exact solution is available, but further numerical experiments not shown here suggest that the images shown in Figure~\ref{fig:focus_plot} are accurate.}

\begin{figure}
  \centering
  \includegraphics[width=0.9\textwidth]{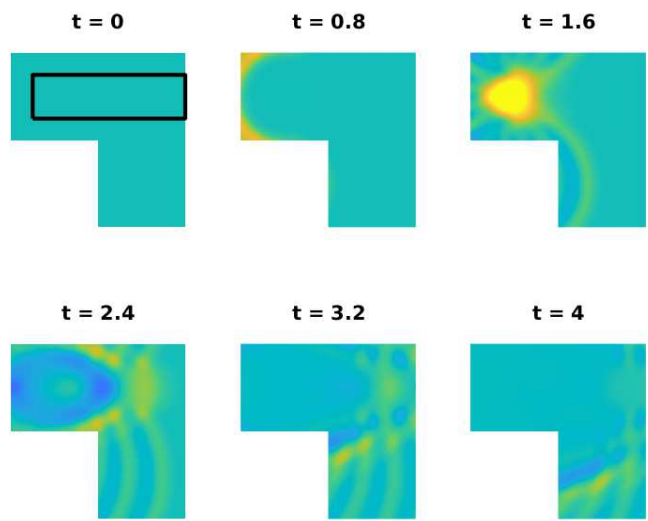}
  \caption{Waves focusing at a point in $\Omega$. In the section of the L-shaped domain indicated by a rectangle in the first image the wave speed is twice as fast as in the \bl{remainder} of the domain.}
  \label{fig:focus_plot}
\end{figure}
\def\cprime{$'$}

\end{document}